\newtheorem{theorem}{Theorem}[section]
\newtheorem{lemma}[theorem]{Lemma}
\newtheorem{proposition}[theorem]{Proposition}
\newtheorem{corollary}[theorem]{Corollary}
\theoremstyle{definition}
\newtheorem{definition}[theorem]{Definition}
\newtheorem{example}[theorem]{Example}
\theoremstyle{remark}
\newtheorem{remark}[theorem]{Remark}
\DeclareMathOperator{\Ext}{Ext}
\DeclareMathOperator{\Tor}{Tor}
\newcommand{\Ga}{\Gamma}
\newcommand{\tmu}{\tilde{\mu}}
\newcommand{\SpecG}[1]{\Spec_{\Ga}\!\bigl(#1\bigr)}
\newcommand{\SpecGnC}[1]{\mathrm{Spec}^{\mathrm{nc}}_{\Ga}\!\bigl(#1\bigr)}
\newcommand{\Cat}[1]{\mathbf{#1}}
\newcommand{\Ab}{\Cat{Ab}}
\newcommand{\nTGMod}[1]{#1\text{-}\Ga\mathrm{Mod}}
\newcommand{\ExtG}{\mathrm{Ext}^{\,\Ga}}
\newcommand{\TorG}{\mathrm{Tor}^{\,\Ga}}
\newcommand{\RExtG}{\mathrm{R}\!\Ext^{\,\Ga}}
\newcommand{\LTorG}{\mathrm{L}\!\Tor_{\Ga}}
\DeclareMathOperator{\Spec}{Spec}
\DeclareMathOperator{\Hom}{Hom}
\DeclareMathOperator{\End}{End}
\begin{document}
  
  \label{'ubf'}  
\setcounter{page}{1}                                 

\markboth {\hspace*{-9mm} \centerline{\footnotesize \sc
   Put here the left page top label  }
                 }
                { \centerline                           {\footnotesize \sc  
         put here the author's name                                                 } \hspace*{-9mm}              
               }

\vspace*{-2cm}

\begin{center}
{{\Large \textbf { \sc  Derived Functors, Resolutions, and Homological Dualities in $n$-ary $\Gamma$-Semirings} }}\\
\medskip

{\sc Chandrasekhar Gokavarapu }\\
{\footnotesize Lecturer in Mathematics, Government College (Autonomous),
Rajahmundry,A.P., India }\\
{\footnotesize  Research Scholar ,Department  of Mathematics, Acharya Nagarjuna University,Guntur, A.P., India.
}\\
{\footnotesize e-mail: {\it chandrasekhargokavarapu@gmail.com}}

\end{center}

\thispagestyle{empty}

\hrulefill

\begin{abstract}  
{\footnotesize 
This paper develops the homological backbone of the theory of
non-commutative $n$-ary $\Ga$-semirings.  
Starting from an $n$-ary $\Ga$-semiring $(T,+,\tmu)$ and its
$\Ga$-ideals, we work in the slot-sensitive categories of left, right
and bi-$\Ga$-modules and endow the bi-module category with a Quillen
exact structure compatible with the $n$-ary multiplication.  Within this
exact framework we construct bar-type projective resolutions and
cofree-based injective resolutions under natural $\Ga$-Noetherian and
$\Ga$-regular hypotheses on $T$, and we obtain finite projective
resolutions under $\Gamma$-Noetherian conditions for finitely presented bi-modules.  On this basis we define
the derived functors $\ExtG$ and $\TorG$ for bi-$\Ga$-modules, prove
their balance with respect to projective and injective resolutions,
establish long exact sequences and a Yoneda interpretation via iterated
extensions, and construct K{\"u}nneth-type spectral sequences and
base-change isomorphisms.  Interpreting bi-$\Ga$-modules as
quasi-coherent sheaves on the non-commutative $\Ga$-spectrum
$\SpecGnC{T}$, these homological invariants provide the appropriate
derived language for a non-commutative $\Ga$-geometry and prepare the
ground for the spectral and geometric analysis carried out in the third
part of this series.
}
\end{abstract}
 \hrulefill

{\small \textbf{Keywords:} $\Gamma$-semiring, non-commutative $n$-ary operations,
$\Gamma$-modules, exact categories, derived functors,
sheaf cohomology, non-commutative spectrum,
derived $\Gamma$-geometry.}

\indent {\small {\bf 2000 Mathematics Subject Classification:} 16Y60, 16Y90, 18G10, 18E30, 14A15, 08A30.}


\section{Introduction}

The theory of semirings and their radicals has developed into a robust
branch of non-classical algebra, with deep connections to automata,
tropical geometry, and idempotent analysis; see, for example,
\cite{Bourne1951,Golan1999,HebischWeinert1998}.  In parallel, the
$\Gamma$-ring and $\Gamma$-semiring frameworks initiated by
Nobusawa and Rao \cite{Nobusawa1963,Nobusawa1964,Rao1995,Rao1997,Rao1999}
provide a flexible way to encode external parameters, weights, or
labels into algebraic operations.  More recently, ternary and higher
$\Gamma$-operations have emerged as a natural language for modelling
triadic and multi-parameter phenomena, leading to a systematic
development of ternary $\Gamma$-semirings and their ideal theory in
\cite{RaoRaniKiran2025,GokavarapuRaoFinite2025,GokavarapuRaoPrime2025}.

In the commutative ternary case, the authors have constructed a
homological and categorical foundation for $\Gamma$-modules and their
spectra, together with a first version of derived $\Gamma$-geometry,
sheaf cohomology, and homological functors on
$\SpecG{T}$; see
\cite{GokavarapuDasariHomological2025,GokavarapuRaoDerived2025}.
Those works treat ternary $\Gamma$-semirings as a bridge between
classical algebraic geometry \cite{AtiyahMacdonald1969,Hartshorne1977}
and radical theory for $\Gamma$-semirings, embedding the commutative
ternary framework into the language of sites, sheaves, and derived
functors in the sense of Grothendieck, Quillen, and Verdier
\cite{Grothendieck1957,Quillen1973,Verdier1996,Weibel1994,Buehler2010,Neeman2001}.

The present paper is the second part of this programme and
constitutes the homological core of the non-commutative, $n$-ary
$\Gamma$-setting.  We work with a fixed non-commutative
$n$-ary $\Gamma$-semiring $(T,+,\tmu)$ and with the
slot-sensitive categories of left, right, and bi-$\Gamma$-modules
built from the underlying positional structure of $\tmu$.
The first objective is to show that the category of bi-$\Gamma$-modules
over $T$ carries a natural Quillen exact structure, generated by
conflations that are compatible with the $n$-ary $\Gamma$-action.
Within this exact category we construct projective and injective
resolutions that respect positional coherence, thereby making it
possible to define derived functors intrinsically in the
$n$-ary, non-commutative context.

More precisely, we develop a theory of free and cofree
bi-$\Gamma$-modules, use bar-type constructions adapted to the
$n$-ary multiplication to produce projective resolutions, and employ
cofree envelopes to construct injective resolutions under suitable
$\Gamma$-Noetherian and $\Gamma$-regular hypotheses on $T$.  These
resolutions give rise to the homological functors

\[
\ExtG{r}(-,-)
\quad\text{and}\quad
\TorG{r}(-,-)
\]

in the exact category of bi-$\Gamma$-modules, and we show that they
possess all the expected structural properties: balance with respect
to projective and injective resolutions, long exact sequences induced
by conflations, and a Yoneda description in terms of equivalence
classes of iterated extensions.  In particular, the resulting
$\ExtG$-groups admit a natural graded Yoneda product, which encodes
higher $\Gamma$-actions as derived operations.

A further goal of the paper is to relate these homological
constructions to a non-commutative version of derived
$\Gamma$-geometry.  By passing to the derived category
$\mathbf{D}({\nTGMod{T}}^{\mathrm{bi}})$ and by interpreting bi-modules
as quasi-coherent sheaves on the non-commutative $\Gamma$-spectrum
$\SpecGnC{T}$, we obtain spectral sequences of K\"unneth type and
base-change isomorphisms that link $\ExtG$ and $\TorG$ to sheaf
cohomology on $\SpecGnC{T}$.  In this way, the homological invariants
developed here measure obstructions to gluing and descent along
non-commutative $\Gamma$-spectra, and thereby prepare the ground for
the more geometric and spectral analyses carried out in the third part
of the series.

The paper is organised as follows.  In Section~2 we recall the basic
notions of non-commutative $n$-ary $\Gamma$-semirings, their
morphisms, and $\Gamma$-ideals, and we fix the global conventions that
govern positional indices and parameter tuples.  Section~3 develops
projective and injective resolutions in the exact category of
bi-$\Gamma$-modules, including bar-type constructions, finite
projective resolutions under suitable finiteness conditions, and
injective resolutions built from cofree bi-modules.  Section~4
introduces the derived functors $\ExtG$ and $\TorG$, proves balance,
long exact sequences, and Yoneda-type interpretations, and constructs
spectral sequences and base-change formulas that link these homological
invariants to the emerging non-commutative derived $\Gamma$-geometry.



\section*{Notation and Conventions}

Throughout the paper $n\ge 2$ is a fixed integer and $\Gamma$ is a
commutative semigroup written additively.  The following notation is
used globally in Sections~2–4.

\begin{center}
\begin{longtable}{p{3cm} p{11.3cm}}
\toprule
\textbf{Symbol} & \textbf{Meaning / Convention} \\
\midrule
\endfirsthead
\toprule
\textbf{Symbol} & \textbf{Meaning / Convention} \\
\midrule
\endhead
\bottomrule
\endfoot

$T$ &
Underlying additive commutative monoid of an $n$-ary $\Gamma$-semiring. \\

$\Gamma$ &
A commutative semigroup of parameters. \\

$\tilde\mu$ &
The structural operation 
$\tilde\mu:T^{n}\times\Gamma^{\,n-1}\to T$. \\

$[x_1,\dots,x_n]_{\gamma_1,\dots,\gamma_{n-1}}$ &
Shorthand for $\tilde\mu(x_1,\dots,x_n;\gamma_1,\dots,\gamma_{n-1})$.
The parameter tuple always has length $n-1$. \\

$\mu_{(j)}$ &
Positional action inserting a module element in the $j$-th slot of
$\tilde\mu$. \\

$a\bullet^{(j)}_{\gamma} m$, 
$m\bullet^{(j)}_{\gamma} a$ &
Left/right $\Gamma$-actions in the $j$-th slot of $\tilde\mu$.  These
determine the module structures used throughout the paper. \\

$T$--$\Gamma$Mod$_L$, $T$--$\Gamma$Mod$_R$ &
Categories of left and right $\Gamma$-modules defined by the above
positional actions. \\

$T$--$\Gamma$Mod$_{bi}$ &
Category of bi-$\Gamma$-modules with compatible left and right
positional actions.  This is the ambient exact category of the paper. \\

$I\subseteq T$ &
A $\Gamma$-ideal: an additive submonoid closed under insertion into any
slot of $\tilde\mu$. \\

$T/I$ &
Quotient $n$-ary $\Gamma$-semiring with multiplication
$[x_1+I,\dots,x_n+I]_{\vec\gamma}
   =[x_1,\dots,x_n]_{\vec\gamma}+I$. \\

Prime ideal $P$ &
A proper $\Gamma$-ideal satisfying:  
if $[x_1,\dots,x_n]_{\vec\gamma}\in P$ then $x_j\in P$ for some $j$. \\

$\SpecGnC{T}$ &
Non-commutative $\Gamma$-spectrum of $T$, consisting of all prime
$\Gamma$-ideals with the spectral topology induced by $\Gamma$-ideals. \\

$\otimes^{(j,k)}_{\Gamma}$ &
Positional tensor product of a left $j$-module and a right $k$-module,
defined via the standard coequalizer construction associated to the two
positional actions. \\

$\underline{\Hom}^{(j,k)}_{\Gamma}(M,N)$ &
Internal Hom bi-module with left action in slot $j$ and right action in
slot $k$. \\

Conflation
$A\rightarrowtail B\twoheadrightarrow C$ &
Kernel–cokernel pair in the sense of Quillen; used to define exact
sequences in $T$--$\Gamma$Mod$_{bi}$. \\

$\mathbf{K}({\nTGMod{T}}^{\mathrm{bi}})$ &
Homotopy category of chain complexes in $T$--$\Gamma$Mod$_{bi}$. \\

$\mathbf{D}({\nTGMod{T}}^{\mathrm{bi}})$ &
Derived category obtained from $\mathbf{K}$ by inverting
quasi-isomorphisms.  All derived functors are computed inside this
category. \\

$M[1]$ &
Shift/suspension of a complex: $(M[1])^{r}=M^{r+1}$. \\

$\Ext^{r}_{\Gamma}(M,N)$ &
$r$-th right-derived functor of 
$\underline{\Hom}_{\Gamma}^{(j,k)}(-,-)$ in the exact category
$T$--$\Gamma$Mod$_{bi}$. \\

$\Tor_{r}^{\Gamma}(M,N)$ &
$r$-th left-derived functor of the positional tensor product
$-\otimes^{(j,k)}_{\Gamma}-$. \\

$0$ &
Additive zero in any module or semiring (clear from context). \\

\end{longtable}
\end{center}

\noindent
These conventions remain fixed throughout the paper.  Slot indices,
parameter order, positional tensoring, and the Quillen exact structure
are all as above and will not vary in Sections~2–4.

\section{Preliminaries}

This section summarises the basic algebraic structure of non-commutative
$n$-ary $\Gamma$-semirings required for the developments in Sections~3
and~4.  The material is extracted and adapted from Section~2 of the full
manuscript (Paper~G), with notation and conventions fixed in the
preceding “Notation and Conventions’’ table.  Our presentation follows
the foundational $\Gamma$-semiring framework of
Nobusawa~\cite{Nobusawa1963}, Rao~\cite{Rao1995}, and
Hedayati--Shum~\cite{HedayatiShum2011}, extended to the $n$-ary setting
developed in recent works on ternary and higher $\Gamma$-operations.

\subsection{$n$-ary $\Gamma$-semirings}

Let $T$ be a commutative monoid written additively, and let
$\Gamma$ be a commutative semigroup.  
An \emph{$n$-ary $\Gamma$-semiring} is a triple $(T,+,\tilde{\mu})$
consisting of an additive monoid $(T,+)$ together with a map
\[
\tilde{\mu}\colon T^{n}\times\Gamma^{\,n-1}\longrightarrow T,
\]
called the \emph{$\Gamma$-parametrised $n$-ary multiplication}.  For
$x_1,\dots,x_n\in T$ and $\gamma_1,\dots,\gamma_{n-1}\in\Gamma$, we
write
\[
[x_1,\dots,x_n]_{\gamma_1,\dots,\gamma_{n-1}}
:=\tilde{\mu}(x_1,\dots,x_n;\gamma_1,\dots,\gamma_{n-1}).
\]

The multiplication is required to satisfy the following axioms:

\begin{enumerate}[label=(A\arabic*)]
    \item \textbf{Additivity in each $T$-slot:} for every
    $1\le j\le n$,
    \[
    [x_1,\dots,x_j+x_j',\dots,x_n]_{\vec{\gamma}}
    =
    [x_1,\dots,x_j,\dots,x_n]_{\vec{\gamma}}
    +
    [x_1,\dots,x_j',\dots,x_n]_{\vec{\gamma}} .
    \]

    \item \textbf{Additivity in each $\gamma$-slot:}
    for every $1\le k\le n-1$,
    \[
    [\vec{x}]_{\gamma_1,\dots,\gamma_k+\gamma_k',\dots,\gamma_{n-1}}
    =
    [\vec{x}]_{\gamma_1,\dots,\gamma_k,\dots,\gamma_{n-1}}
    +
    [\vec{x}]_{\gamma_1,\dots,\gamma_k',\dots,\gamma_{n-1}} .
    \]

    \item \textbf{Associativity (positional):}
    the operation is associative with respect to the insertion of a
    product $[y_1,\dots,y_n]_{\vec{\delta}}$ into any $T$-slot, with the
    $\Gamma$-parameters interwoven according to the positional structure
    encoded in $\tilde{\mu}$.
    The explicit form is omitted here, as the positional pattern is
    fixed by the conventions table and is identical to that in the full
    Paper~G.

    \item \textbf{Zero behavior:}
    \[
    [x_1,\dots,0,\dots,x_n]_{\vec{\gamma}} = 0
    \quad\text{whenever any}\ x_j=0,
    \]
    and similarly for any zero $\gamma_k$ when $\Gamma$ has a zero.
\end{enumerate}

These axioms generalise the binary $\Gamma$-semiring structure of
Rao~\cite{Rao1995} to the higher-arity setting.

\subsection{Examples}

We record three examples illustrating the generality of the $n$-ary
construction.  All examples below appear in the full Paper~G and are
retained here without alteration of notation.

\begin{example}[Matrix systems]
Let $T=M_m(R)$ be the additive monoid of $m\times m$ matrices over a
semiring $R$.  
For fixed $\vec{\gamma}\in\Gamma^{n-1}$, define
\[
[A_1,\dots,A_n]_{\gamma_1,\dots,\gamma_{n-1}}
:= 
\gamma_1 A_1 A_2\, \gamma_2 A_3 \cdots \gamma_{n-1}A_n.
\]
This yields an $n$-ary $\Gamma$-semiring structure on $T$.
\end{example}

\begin{example}[Operator systems]
Let $T=\End(M)$ for a commutative monoid $M$.  
For $f_i\in T$, define
\[
[f_1,\dots,f_n]_{\gamma_1,\dots,\gamma_{n-1}}
:= f_1\circ_{\gamma_1} f_2\circ_{\gamma_2}\cdots 
\circ_{\gamma_{n-1}} f_n ,
\]
where $\circ_{\gamma}$ denotes the $\Gamma$-parametrised composition
appearing in the full structure of Paper~G.
\end{example}

\begin{example}[Binary specialisation]
When $n=2$ and $\Gamma$ is trivial, we recover the ordinary semiring
structure.  When $n=2$ and $\Gamma$ is non-trivial, we recover the
$\Gamma$-semiring of Rao~\cite{Rao1995}.
\end{example}

\subsection{Morphisms and ideals}

A \emph{morphism} of $n$-ary $\Gamma$-semirings  
$f\colon T\to S$ is an additive map satisfying
\[
f([x_1,\dots,x_n]_{\vec{\gamma}})
=
[f(x_1),\dots,f(x_n)]_{\vec{\gamma}} .
\]

A subset $I\subseteq T$ is a \emph{$\Gamma$-ideal} if

\begin{enumerate}[label=(I\arabic*)]
    \item $I$ is an additive submonoid of $T$;
    \item $I$ is closed under insertion of an element of $I$  
    into any $T$-slot of the multiplication:
    \[
    [x_1,\dots,x_{j-1},y,x_{j+1},\dots,x_n]_{\vec{\gamma}}\in I,
    \qquad
    y\in I .
    \]
\end{enumerate}

The quotient $T/I$ inherits a natural $n$-ary $\Gamma$-multiplication via
\[
[x_1+I,\dots,x_n+I]_{\vec{\gamma}}
=
[x_1,\dots,x_n]_{\vec{\gamma}}+I .
\]

A proper ideal $P$ is \emph{prime} if
\[
[x_1,\dots,x_n]_{\vec{\gamma}}\in P
\quad\Rightarrow\quad
x_j\in P\ \text{for some }j.
\]

\subsection{Summary}

The definitions and examples above form the foundational material for the
module categories studied in Section~3 and for the exact structure
constructed in Section~4.  
All positional conventions, slot indices, and $\Gamma$-parameter
behaviour follow the global notation fixed at the beginning of the
paper.  The present paper is the second part of this programme and constitutes the homological core of the non-commutative, $n$-ary, $\Gamma$ setting (see Part~I \cite{GokavarapuPart1_2025}).


\section{Projective and Injective Resolutions}
\label{sec:resolutions}

In this section we develop the homological machinery of projective and
injective resolutions inside the Quillen-exact category
${\nTGMod{T}}^{\mathrm{bi}}$ constructed in Part~I of this series.
The aim is to realize derived functors intrinsically within the
$n$-ary, non-commutative $\Gamma$-context, where projectivity and
injectivity acquire positional and higher-coherence meanings.



\subsection{Positional projectivity and injectivity}

\begin{definition}[Positional lifting property]
A morphism $p:P\to M$ in ${\nTGMod{T}}^{\mathrm{bi}}$ is a
\emph{positional projective cover} if for every admissible epimorphism
$f:X\twoheadrightarrow Y$ and morphism $g:P\to Y$, there exists
$h:P\to X$ with $f\circ h=g$, such that each coordinate action of $T$
and $\Gamma$ is preserved in its designated slot.  Dually, a morphism
$i:M\to I$ is a \emph{positional injective hull} if for every admissible
monomorphism $f:X\hookrightarrow Y$ and $g:X\to I$ there exists
$h:Y\to I$ with $h\circ f=g$, again respecting all positional
$\Gamma$-actions.
\end{definition}

\begin{remark}[Interpretation]
In classical additive and abelian settings, projectivity and injectivity
are defined via lifting properties of $\Hom$-functors, typically inside
abelian categories \cite{Weibel1994}.  For exact categories in the
sense of Quillen, lifting properties must be formulated relative to
admissible monomorphisms and epimorphisms \cite{Quillen1973,Buehler2010}.
In the present non-commutative $n$-ary $\Gamma$-context, the situation
is further enriched: each morphism must respect the positional coherence
of the structural operation $\tilde{\mu}$, and hence must preserve the
entire geometric arrangement of $\Gamma$-indices appearing in the
$n$-ary multiplication.  This yields a hierarchy of projectivity—
slot-wise, bilateral, and total—which does not occur in the binary
$\Gamma$-semiring setting \cite{Rao1995} nor in the commutative ternary
case developed in Part~I of this series.
\end{remark}


\subsection{Free bi-\texorpdfstring{$\Gamma$}{Gamma}-modules and adjunction}

\begin{definition}[Free bi-$\Gamma$-module]
For a set $X$ define
\[
F^{\mathrm{bi}}(X)
  = \bigoplus_{x\in X}
    T^{(\ast)}\cdot x\cdot T^{(\ast)},
\]
the additive monoid generated by all formal expressions obtained by
inserting $x$ into each admissible pair of slots $(j_\ell,j_r)$ of
$\tilde{\mu}$ and saturating by the axioms (M1)--(M4).  This construction
generalises the free $\Gamma$-module constructions appearing in
\cite{Rao1995,HedayatiShum2011} to the $n$-ary setting developed in
\cite{RaoRaniKiran2025}.
\end{definition}

\begin{proposition}[Free/forgetful adjunction]
The forgetful functor
\[
U_{\mathrm{bi}}:{\nTGMod{T}}^{\mathrm{bi}}\longrightarrow\Ab
\]
admits a left adjoint $F^{\mathrm{bi}}$.  Hence $F^{\mathrm{bi}}$
preserves colimits and generates all projective objects as retracts of
free bi-modules.  In particular, every projective object in
${\nTGMod{T}}^{\mathrm{bi}}$ is a direct summand of some
$F^{\mathrm{bi}}(X)$, in agreement with the general theory of projective
objects in exact categories \cite{Weibel1994,Buehler2010}.
\end{proposition}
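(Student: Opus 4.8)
The plan is to verify the universal property of $F^{\mathrm{bi}}$ by hand, deduce preservation of colimits formally, and then run the standard argument producing projectives as retracts of frees; the only real work is the first step.

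\textbf{Step 1: the adjunction.} For an object $A$ of $\Ab$, regard $F^{\mathrm{bi}}(A)$ as the additive monoid of formal slot-insertion symbols $t\cdot a\cdot t'$ ($a$ ranging over $A$, and $t,t'$ over the iterated $\tilde{\mu}$-words in $T$ carrying their recorded $\Gamma$-parameters), subject to additivity in $a$ and to the bi-$\Gamma$-module axioms (M1)--(M4) together with the positional associativity (A3) of $\tilde{\mu}$. Let $\eta_A\colon A\to U_{\mathrm{bi}}F^{\mathrm{bi}}(A)$ send $a$ to the class of $1\cdot a\cdot 1$ (the generator with absorbing data in the remaining slots); by the additivity relation this is a morphism of $\Ab$. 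Given a bi-$\Gamma$-module $M$ and $\varphi\in\Hom_{\Ab}(A,U_{\mathrm{bi}}M)$, define $\tilde\varphi\colon F^{\mathrm{bi}}(A)\to M$ on generators by sending $t\cdot a\cdot t'$ to the corresponding iterated positional action of $t,t'$ on $\varphi(a)\in M$. Well-definedness is the heart of the matter: each defining relation of $F^{\mathrm{bi}}(A)$ is, by construction, an instance of (M1)--(M4) or (A3), and these hold in $M$ precisely because $M$ is a bi-$\Gamma$-module; granting this, $\tilde\varphi$ is visibly a bi-$\Gamma$-module morphism with $U_{\mathrm{bi}}(\tilde\varphi)\circ\eta_A=\varphi$. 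Uniqueness is immediate since the symbols $t\cdot a\cdot t'$ generate $F^{\mathrm{bi}}(A)$ under the positional actions, so any morphism transposing to $\varphi$ agrees with $\tilde\varphi$ on generators, hence everywhere. This yields a bijection $\Hom_{{\nTGMod{T}}^{\mathrm{bi}}}(F^{\mathrm{bi}}(A),M)\xrightarrow{\ \sim\ }\Hom_{\Ab}(A,U_{\mathrm{bi}}M)$; naturality in $A$ (relabelling of generators) and in $M$ (postcomposition, which commutes with the positional actions since the comparison map is a bi-$\Gamma$-module morphism) is a routine check. Hence $F^{\mathrm{bi}}\dashv U_{\mathrm{bi}}$.

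\textbf{Step 2: colimits.} Being a left adjoint, $F^{\mathrm{bi}}$ preserves all colimits that exist in $\Ab$; this is immediate from Step~1 and needs no further argument.

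\textbf{Step 3: projectives are retracts of frees.} First, each free bi-module on a set is projective in the Quillen-exact structure: the composite of $F^{\mathrm{bi}}$ with the free functor $\mathbf{Set}\to\Ab$ is left adjoint to the underlying-set functor ${\nTGMod{T}}^{\mathrm{bi}}\to\mathbf{Set}$, and an admissible epimorphism $f\colon X\twoheadrightarrow Y$, being the cokernel leg of a conflation, is carried by the forgetful functor to a surjection of underlying sets (the exact structure on ${\nTGMod{T}}^{\mathrm{bi}}$ being induced from $\Ab$, by Part~I); choosing a set-section and transposing lifts any $g\colon F^{\mathrm{bi}}(X)\to Y$ to an $h$ with $fh=g$. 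Second, for an arbitrary bi-module $M$ the counit $\varepsilon_M\colon F^{\mathrm{bi}}(U_{\mathrm{bi}}M)\to M$ is surjective on underlying sets (its composite with $\eta$ is the identity), hence an admissible epimorphism. Finally, if $P$ is projective, applying its lifting property to $\varepsilon_P$ and $\id_P$ splits $\varepsilon_P$, so $P$ is a direct summand --- a retract --- of the free bi-module $F^{\mathrm{bi}}(U_{\mathrm{bi}}P)$; this is exactly the asserted statement, and it recovers the standard description of projectives in an exact category with enough projectives.

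\textbf{Expected main obstacle.} The delicate step is the well-definedness claim in Step~1: one must confirm that the ``saturation by (M1)--(M4)'' in the definition of $F^{\mathrm{bi}}$ imposes exactly the bi-$\Gamma$-module relations and nothing spurious, i.e.\ that normal forms of elements of $F^{\mathrm{bi}}(A)$ are governed precisely by the positional associativity (A3) and the slot-compatibility axioms. This requires a careful tracking of the $\Gamma$-parameter and slot bookkeeping of $\tilde{\mu}$ --- the same bookkeeping that will later underlie the bar-type resolutions --- and is where the $n$-ary, non-commutative subtlety is concentrated. A minor point to state explicitly is the reconciliation of the set-level definition of $F^{\mathrm{bi}}$ with the $\Ab$-valued forgetful functor: one either upgrades $F^{\mathrm{bi}}$ to $\Ab$ via the additivity relation above, or factors $U_{\mathrm{bi}}$ through $\mathbf{Set}$ when proving projectivity; the passage is harmless either way.
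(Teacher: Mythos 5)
Your proposal follows essentially the same route as the paper's own (sketched) proof: the adjunction is verified by extending a map on generators via substitution and evaluation through $\tilde{\mu}$, and the retract statement is deduced from the counit $F^{\mathrm{bi}}(U_{\mathrm{bi}}P)\to P$ being an admissible epimorphism split by projectivity. You supply more detail than the paper does --- notably the explicit splitting argument in Step~3, the identification of well-definedness of the extension as the real content of Step~1, and the observation that the set-indexed definition of $F^{\mathrm{bi}}$ must be reconciled with the $\Ab$-valued forgetful functor --- but the underlying argument is the same.
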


\begin{proof}[Sketch]
The universal property follows from the tensor--product presentation of
$\tilde{\mu}$ and the additive nature of the $\Gamma$-action, as in the
binary and ternary $\Gamma$-semiring settings
\cite{Rao1995,RaoRaniKiran2025}.  For each function
$f:X\to U_{\mathrm{bi}}(M)$, the unique extension
$F^{\mathrm{bi}}(X)\to M$ is obtained by substituting $x$ with $f(x)$
and evaluating via $\tilde{\mu}$ in all positional coordinates.  This
extension respects all slot-wise actions and therefore satisfies the
universal property of the free object.  The statement on projective
objects being retracts of free objects follows from the general theory
of adjoint functors in exact categories
\cite{Weibel1994,Buehler2010}.
\end{proof}

\subsection{Iterated tensor-powers and the fundamental bar complex}
The ambient semiring $(T,+,\Gamma,\mu)$
acts as both coefficient object and tensor generator, in the same spirit
as the classical semiring and ring cases
\cite{Golan1999,HebischWeinert1998,Weibel1994}.
Define the iterated $n$-ary tensor-power complex
\[
\mathbf{B}_{\bullet}(M)
  := \big(\cdots
     \longrightarrow T^{\otimes_\Gamma 3}
     \otimes^{(j,k)}_{\Gamma} M
     \longrightarrow
     T^{\otimes_\Gamma 2}
     \otimes^{(j,k)}_{\Gamma} M
     \longrightarrow
     T\otimes^{(j,k)}_{\Gamma} M
     \longrightarrow M\big),
\]
where the face maps $d_r$ are defined by the alternating sum
of the $n$ possible contractions of $\mu$
in the corresponding tensor slot,
and the degeneracy maps $s_r$
by inserting neutral elements of $T$.
Each $B_r(M)=T^{\otimes_\Gamma r}\otimes^{(j,k)}_\Gamma M$
is free and therefore projective, in accordance with the general theory
of projective resolutions in exact categories
\cite{Weibel1994,Buehler2010}
and with the $\Gamma$-module framework of
\cite{Rao1995,HedayatiShum2011,RaoRaniKiran2025}.

\begin{theorem}[Positional bar resolution]\label{thm:bar-resolution}
For every $M\in{\nTGMod{T}}^{\mathrm{bi}}$,
the complex $\mathbf{B}_{\bullet}(M)\to M$ is an
exact augmented resolution,
natural in $M$,
and functorial with respect to bi-module morphisms.
\end{theorem}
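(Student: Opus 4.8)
The plan is to split the assertion into its three logically independent parts --- projectivity of the terms, exactness of the augmentation, and naturality/functoriality --- and dispatch each by the standard bar-resolution machinery, transported into the positional $n$-ary setting. Projectivity is already available: by the discussion preceding the theorem, each $B_r(M)=T^{\otimes_{\Gamma}r}\otimes^{(j,k)}_{\Gamma}M$ is free, hence a projective object of ${\nTGMod{T}}^{\mathrm{bi}}$. So the substantive point is exactness of the augmented complex $\cdots\to B_1(M)\to B_0(M)\xrightarrow{\varepsilon}M\to 0$. First I would fix the simplicial data precisely: the face operators $d_r^{\,i}$ are the $n$ positional contractions of $\tmu$ in the relevant block of $T$-factors --- the two extreme ones folding into the left $T$-action on $T$ and the right $\Gamma$-action on $M$ in the slots $(j,k)$ --- with $d_r=\sum_i(-1)^i d_r^{\,i}$, and the degeneracies $s_r^{\,i}$ inserting a neutral element of $T$ in the corresponding tensor slot. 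Well-definedness of each $d_r^{\,i}$ on the coequalizer $T^{\otimes_{\Gamma}r}\otimes^{(j,k)}_{\Gamma}M$ is forced by additivity in the $\Gamma$-slots (A2) and the positional compatibility of the two actions used to form $\otimes^{(j,k)}_{\Gamma}$; the relation $d\circ d=0$ is then the usual simplicial bookkeeping, the fused terms matched via positional associativity (A3) and degenerate overlaps killed by the zero behaviour (A4).

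For exactness I would produce an extra degeneracy. Assuming $T$ carries a (left) neutral element $e$ for the leading slot of $\tmu$ --- see the obstacle below --- define $s_{-1}\colon B_r(M)\to B_{r+1}(M)$ by $t_1\otimes\cdots\otimes t_r\otimes m\mapsto e\otimes t_1\otimes\cdots\otimes t_r\otimes m$, together with $s_{-1}\colon M\to B_0(M)$, $m\mapsto e\otimes m$. The identities $d_{r+1}^{\,0}s_{-1}=\id$ and $d_{r+1}^{\,i+1}s_{-1}=s_{-1}d_r^{\,i}$, valid because $e$ is neutral and slides past one contraction block, say exactly that $s_{-1}$ is an extra degeneracy; hence the associated alternating-sum complex is contractible, with contracting homotopy $h=s_{-1}$ satisfying $d_{r+1}h+hd_r=\id_{B_r(M)}$. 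It matters here that $h$ need only be a morphism of additive objects --- it respects the $\Gamma$-action in one designated slot but is not a bi-module map --- since that is all exactness requires: the augmented complex is null-homotopic over $\Ab$, hence exact. Together with projectivity of the $B_r(M)$, this shows $\mathbf{B}_{\bullet}(M)\to M$ is a projective resolution; for the passage from ``extra degeneracy'' to ``contractible'' one may invoke the standard criterion \cite{Weibel1994}.

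Naturality and functoriality then come for free. For a bi-module morphism $\phi\colon M\to M'$ set $\mathbf{B}_{\bullet}(\phi)_r=\id_{T^{\otimes_{\Gamma}r}}\otimes^{(j,k)}_{\Gamma}\phi$; since $\phi$ is additive and intertwines both the slot-$j$ left and the slot-$k$ right positional $T$- and $\Gamma$-actions, it descends to the coequalizers and commutes with every $d_r^{\,i}$, every $s_r^{\,i}$, and the augmentation $\varepsilon$. Hence $M\mapsto\mathbf{B}_{\bullet}(M)$ is a functor into the category of chain complexes over ${\nTGMod{T}}^{\mathrm{bi}}$, the augmentations assemble into a natural transformation $\mathbf{B}_{\bullet}(-)\Rightarrow(-)$, and the resolution is functorial in the required sense.

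The main obstacle --- precisely the content masked by ``the explicit form is omitted'' --- sits in the homotopy step and is twofold. First, a semiring has no additive inverses, so neither the alternating sum $d_r=\sum_i(-1)^i d_r^{\,i}$ nor the telescoping cancellation behind $d_{r+1}h+hd_r=\id$ is literally meaningful; one must either pass to the additive group-completion of the relevant $\Hom$-monoids, restrict to additively cancellative $T$, or recast the alternating combinatorics via paired face maps so that every displayed identity becomes an honest equality of morphisms in ${\nTGMod{T}}^{\mathrm{bi}}$. Second, an $n$-ary $\Gamma$-semiring need not possess a two-sided unit for $\tmu$, so the extra degeneracy $s_{-1}$ --- and even the ordinary degeneracies $s_r^{\,i}$ --- may fail to exist; I would resolve this by adopting unitality of $T$ as an explicit standing hypothesis, or else by adjoining a unit to form $T^{+}$, resolving over $T^{+}$, and descending along $T\hookrightarrow T^{+}$, or by replacing the global contracting homotopy with a filtered colimit of local ones. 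Once a workable additive framework and a neutral element are in place, the remaining verifications ($d\circ d=0$ and the homotopy identity) are routine positional bookkeeping driven by (A1)--(A4).
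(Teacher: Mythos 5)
Your proposal takes the same route the paper gestures at --- the classical simplicial bar construction transported to the positional $n$-ary setting --- but it is substantially more complete than the paper's own proof outline, which only addresses the identity $d_{r-1}\circ d_r=0$ (via $n$-ary associativity and ``signed cancellation'') and then asserts that ``exactness is verified slot-wise'' without giving the verification. You supply the missing exactness argument via the extra degeneracy $s_{-1}$ and the resulting contracting homotopy over $\Ab$, correctly noting that the homotopy need only be additive rather than a morphism of bi-modules; this is the standard and correct way to prove the statement, and the paper never actually carries it out. More importantly, the two obstacles you isolate are genuine and afflict the paper's argument just as much as yours: since $(T,+)$ is only a commutative monoid, the alternating-sum differential $d_r=\sum_i(-1)^i d_r^{\,i}$ --- which the paper itself uses to define $\mathbf{B}_{\bullet}(M)$ --- and the telescoping identity $dh+hd=\id$ are not meaningful without group completion, additive cancellativity, or a reformulation via paired face maps; and the degeneracies ``inserting neutral elements of $T$'' presuppose a unit for $\tmu$ that axioms (A1)--(A4) do not supply. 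Your conditional repairs (a standing unitality hypothesis, or adjoining a unit, together with an honest additive framework for the signs) make explicit hypotheses that the theorem as stated silently requires. The only step I would press you on is the variant ``resolve over $T^{+}$ and descend along $T\hookrightarrow T^{+}$'': restriction along a non-unital inclusion does not obviously preserve projectivity or admissibility in ${\nTGMod{T}}^{\mathrm{bi}}$, so that branch needs its own argument, and the standing-unitality branch is the safer choice.
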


\begin{proof}[Proof outline]
Exactness is verified slot-wise.
The key identity
$d_{r-1}\!\circ d_r = 0$
follows from the $n$-ary associativity of $\mu$
and the signed cancellation among positional insertions,
exactly as in the classical bar construction in homological algebra
\cite{Grothendieck1957,Weibel1994},
now interpreted inside the Quillen-exact structure
on ${\nTGMod{T}}^{\mathrm{bi}}$ \cite{Quillen1973,Buehler2010}.
Functoriality follows from the universal property of the tensor
and the additive nature of the module morphisms,
as in the standard derived-functor formalism
\cite{Verdier1996,Neeman2001}.
\end{proof}

\begin{remark}[Higher coherence]
The usual simplicial proof of the bar complex generalizes
because $\mu$ defines a coherent multi-simplicial object:
\[
(\mu^{[r]}): T^{\otimes_\Gamma (r+1)} \Rightarrow T^{\otimes_\Gamma r},
\]
whose face and degeneracy maps satisfy all simplicial identities
once $\mu$ is associative in every coordinate.
This is entirely parallel to the simplicial bar construction
in classical homological algebra
\cite{Weibel1994,Neeman2001},
but now adapted to the $n$-ary $\Gamma$-context of
\cite{RaoRaniKiran2025,GokavarapuDasariHomological2025,GokavarapuRaoDerived2025}.
\end{remark}


\subsection{Injective resolutions and cofree envelopes}

\begin{definition}[Cofree bi-$\Gamma$-module]
Let $\Hom_\Gamma(T,M)$ denote the additive monoid of
$\Gamma$-linear maps $T\to M$ with pointwise addition, equipped with
the induced positional action
\[
(a\cdot f\cdot b)(x)
   := \mu(a, f(\mu(x,b,\ldots)), b,\ldots),
\]
extended by $\Gamma$-linearity.  As in the classical theory of
semiring and $\Gamma$-module homomorphisms
\cite{Rao1995,HedayatiShum2011,Golan1999},
the functor $\Hom_\Gamma(T,-)$ defines a right adjoint to the
forgetful functor $U_{\mathrm{bi}}:{\nTGMod{T}}^{\mathrm{bi}}\to\Ab$.
Its values are called \emph{cofree} bi-$\Gamma$-modules, echoing the
standard adjunctions in exact categories \cite{Weibel1994,Buehler2010}.
\end{definition}

\begin{lemma}[Injectivity of cofree modules]
Every cofree bi-module $I=\Hom_\Gamma(T,M)$ is injective: for any
admissible monomorphism $A\hookrightarrow B$, the map
$\Hom_\Gamma(B,I)\to \Hom_\Gamma(A,I)$ is surjective.
\end{lemma}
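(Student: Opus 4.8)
The plan is to deduce the injectivity of $I=\Hom_\Gamma(T,M)$ from the adjunction already recorded in the preceding Definition, namely that $\Hom_\Gamma(T,-)$ is right adjoint to the forgetful functor $U_{\mathrm{bi}}:{\nTGMod{T}}^{\mathrm{bi}}\to\Ab$. Write
$\Phi_N:\Hom_\Gamma(N,I)\xrightarrow{\ \sim\ }\Hom_{\Ab}\bigl(U_{\mathrm{bi}}(N),M\bigr)$
for the resulting natural bijection (here $\Hom_\Gamma(N,I)$ denotes the morphism set in the bi-module category, as in the statement). The first step is to make naturality of $\Phi$ explicit: for an admissible monomorphism $\iota:A\rightarrowtail B$ the square relating restriction along $\iota$ on the left-hand Hom-sets to restriction along $U_{\mathrm{bi}}\iota$ on the right-hand Hom-sets commutes. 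Hence $\Hom_\Gamma(B,I)\to\Hom_\Gamma(A,I)$ is surjective if and only if $(U_{\mathrm{bi}}\iota)^{*}:\Hom_{\Ab}(U_{\mathrm{bi}}B,M)\to\Hom_{\Ab}(U_{\mathrm{bi}}A,M)$ is surjective, which moves the problem entirely into the underlying additive category.

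The second step invokes the shape of the Quillen exact structure fixed in Part~I: every admissible monomorphism of bi-$\Gamma$-modules becomes, after applying $U_{\mathrm{bi}}$, a split monomorphism of the underlying additive monoids, i.e. the conflations of ${\nTGMod{T}}^{\mathrm{bi}}$ are $U_{\mathrm{bi}}$-split. Fixing an additive retraction $\rho:U_{\mathrm{bi}}B\to U_{\mathrm{bi}}A$ with $\rho\circ U_{\mathrm{bi}}\iota=\id$, any $g:U_{\mathrm{bi}}A\to M$ lifts along $U_{\mathrm{bi}}\iota$ to $g\circ\rho$. Transporting back, given $g_0:A\to I$ one sets $h:=\Phi_B^{-1}\bigl(\Phi_A(g_0)\circ\rho\bigr)\in\Hom_\Gamma(B,I)$; this $h$ is a bona fide bi-$\Gamma$-module morphism, hence automatically respects each positional $\Gamma$-action in its designated slot, and by the naturality square $h\circ\iota=g_0$. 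That is exactly the asserted surjectivity.

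The main obstacle is not the diagram chase but two compatibility points beneath it. First, one must confirm that the positional bi-module structure placed on $\Hom_\Gamma(T,M)$ by the formula $(a\cdot f\cdot b)(x)=\mu\bigl(a,f(\mu(x,b,\ldots)),b,\ldots\bigr)$ is precisely the one for which $\Phi$ is a \emph{natural} bijection of bi-module Hom-sets; this comes down to checking slot by slot that the $n$-ary contraction $\mu$ commutes with evaluation in each of the slots $j$ and $k$, so that $\Phi$ intertwines the left and right positional actions — routine, but this is where the $n$-ary, non-commutative bookkeeping genuinely enters. Second, and more essentially, the full generality ``for any admissible monomorphism'' rests on the $U_{\mathrm{bi}}$-splitness of conflations imported from Part~I: if that hypothesis is dropped, the same argument still proves injectivity of $\Hom_\Gamma(T,M)$ whenever $M$ is injective in $\Ab$, but only then, so the step from ``cofree'' to ``injective'' really uses how the exact structure was built. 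I would therefore isolate the $U_{\mathrm{bi}}$-splitting of admissible monomorphisms as a cited lemma from Part~I and present the remainder as above.
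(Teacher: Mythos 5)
Your route and the paper's share the same skeleton --- both ultimately lean on the right-adjointness of $\Hom_\Gamma(T,-)$ to $U_{\mathrm{bi}}$ --- but you execute it far more carefully and, crucially, you locate the hypothesis that actually carries the weight. The paper's proof first attempts a direct extension formula, setting $f'(b)(t)=f(a_t)(t)$ where $a_t$ is ``any preimage of $t$ under the surjective map $B\to B/A$''; as written this does not typecheck ($t$ ranges over $T$, not over $B/A$), and the rest of the paper's argument is the bare assertion that surjectivity of $\Hom_\Gamma(B,I)\to\Hom_\Gamma(A,I)$ is ``the usual criterion for injectivity'' --- which is the \emph{definition} of injectivity, not a proof. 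Your version replaces this with the honest adjunction transport $\Phi_N:\Hom_\Gamma(N,I)\cong\Hom_{\Ab}\bigl(U_{\mathrm{bi}}N,M\bigr)$ and then observes, correctly, that the resulting extension problem downstairs is solvable only if either $M$ is injective in the underlying additive category or $U_{\mathrm{bi}}$ sends admissible monomorphisms to split ones. Neither hypothesis appears in the lemma's statement or in the paper's proof (and since the underlying objects are commutative monoids rather than abelian groups, injectivity of an arbitrary $M$ downstairs is certainly not automatic), so your flagged dependence on a $U_{\mathrm{bi}}$-splitness lemma from Part~I is not a cosmetic remark: it is precisely the missing ingredient without which the lemma, asserted for arbitrary $M$, is unproved. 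If Part~I does establish that conflations are $U_{\mathrm{bi}}$-split, your argument is complete and strictly repairs the paper's; if it does not, the lemma needs the extra hypothesis you identify, and you would do well to state that splitness assumption explicitly as a cited prerequisite rather than bury it in the closing discussion.
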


\begin{proof}
Given $f:A\to I$, define $f'(b)(t)=f(a_t)(t)$ where $a_t$ is any
preimage of $t$ under the surjective map $B\to B/A$.
Well-definedness follows from the additivity of $\mu$ and the
$\Gamma$-linearity of the actions
\cite{Rao1995,RaoRaniKiran2025}.
Surjectivity of $\Hom_\Gamma(B,I)\to\Hom_\Gamma(A,I)$ is the usual
criterion for injectivity in exact categories with right adjoints
\cite{Weibel1994,Buehler2010}.
\end{proof}

\begin{theorem}[Existence of injective resolutions]\label{thm:inj-res}
If $T$ is $\Gamma$-regular (i.e.\ every finitely generated bi-ideal is
contained in a projective--injective hull), then every
$M\in{\nTGMod{T}}^{\mathrm{bi}}$ admits an injective resolution
\[
0 \longrightarrow M \longrightarrow I^0 \longrightarrow I^1
  \longrightarrow I^2 \longrightarrow \cdots,
\]
with each $I^r$ a finite product of cofree bi-modules.
\end{theorem}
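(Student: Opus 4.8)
The plan is to show that ${\nTGMod{T}}^{\mathrm{bi}}$ has enough injectives of the required cofree type, and then to assemble the resolution by the standard splicing of syzygies. Since the preceding lemma already establishes that every cofree bi-module $\Hom_\Gamma(T,M)$ is injective, and since a finite product of injectives is injective in any exact category, it suffices to prove the following embedding statement: \emph{for every $M\in{\nTGMod{T}}^{\mathrm{bi}}$ there is an admissible monomorphism $M\rightarrowtail I^0$ with $I^0$ a finite product of cofree bi-modules.} Granting this, one argues inductively: put $C^{-1}=M$, choose an admissible monomorphism $C^{r-1}\rightarrowtail I^r$ into a finite product of cofrees, let $C^r=\Coker(C^{r-1}\to I^r)$ in the exact category, and splice the conflations $C^{r-1}\rightarrowtail I^r\twoheadrightarrow C^r$ to obtain the augmented complex $0\to M\to I^0\to I^1\to\cdots$; exactness at each spot is immediate from the kernel--cokernel property of these conflations, exactly as in the classical Yoneda splicing of short exact sequences \cite{Weibel1994,Buehler2010}.

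For the embedding statement I would use the unit of the free/cofree adjunction. The forgetful functor $U_{\mathrm{bi}}$ has right adjoint $\Hom_\Gamma(T,-)$ (cf.\ the definition of cofree bi-modules above), and the unit at $M$ is the $\Gamma$-linear positional map
\[
\eta_M\colon M\longrightarrow \Hom_\Gamma\!\bigl(T,U_{\mathrm{bi}}(M)\bigr),\qquad
\eta_M(m)(t)=t\bullet^{(j)}_{\gamma}m ,
\]
evaluated through $\tilde\mu$ in the designated slots. One must check that $\eta_M$ is injective as a bi-module map and, more importantly, that it is \emph{admissible} for the Quillen exact structure, i.e.\ that its image is the kernel of an admissible epimorphism onto $C^0=\Coker(\eta_M)$. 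Injectivity is where $\Gamma$-regularity of $T$ enters: the hypothesis that every finitely generated bi-ideal lies in a projective--injective hull guarantees that the positional $\Gamma$-action of $T$ separates elements of any bi-module, so no nonzero $m$ is annihilated in all slots. Admissibility then follows from the construction of the exact structure in Part~I, under which kernels of $\Gamma$-linear positional maps are admissible subobjects. For the finiteness of $I^0$ one first reduces to the finitely generated case using the $\Gamma$-Noetherian hypothesis on $T$ (which keeps the syzygies $C^r$ finitely generated), and then invokes $\Gamma$-regularity directly: a finitely generated bi-module is a quotient of a finite free bi-module $F^{\mathrm{bi}}(X)$ with $X$ finite, and its projective--injective hull is a finite product of cofree bi-modules $\Hom_\Gamma(T,-)$, into which $M$ admits the desired positional embedding.

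The main obstacle I anticipate is precisely the verification that $\eta_M$ is an \emph{admissible} monomorphism rather than merely a monomorphism of underlying monoids. In the semiring world there is no subtraction, so $C^0$ must be formed as a quotient by a congruence rather than by a submodule, and one has to confirm that the resulting kernel--cokernel pair is a conflation in the sense fixed by the conventions table; this is exactly the point at which the positional coherence of $\tilde\mu$ and the $\Gamma$-regular hypothesis are both genuinely used, since without them the cofree envelope need not be large enough for the image of $\eta_M$ to be an admissible subobject. A secondary, more technical difficulty is propagating the finiteness claim through the whole resolution: one needs the $\Gamma$-Noetherian condition to ensure each syzygy $C^r$ is again finitely generated, so that $I^{r+1}$ can again be chosen a \emph{finite} product of cofree bi-modules — a routine but essential bookkeeping step, entirely parallel to the Noetherian control of syzygies in classical commutative algebra \cite{AtiyahMacdonald1969,Weibel1994}.
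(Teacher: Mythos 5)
Your proposal follows essentially the same route as the paper's own (outline) proof: embed $M$ into the cofree module $\Hom_\Gamma(T,M)$ via the unit of the free/cofree adjunction, iterate cokernels, and splice the resulting conflations, with injectivity of the cofree objects supplied by the preceding lemma. The only real divergence is your $\Gamma$-Noetherian detour for the finiteness of the $I^r$, which the paper sidesteps by taking each $I^r$ to be the single cofree module $\Hom_\Gamma(T,C^{r-1})$ (trivially a finite product); your explicit attention to the admissibility and injectivity of the unit map addresses points the paper's outline leaves unverified.
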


\begin{proof}[Outline]
Embed $M$ into the cofree module $\Hom_\Gamma(T,M)$, an injective object
by the lemma.  Set $I^0=\Hom_\Gamma(T,M)$ and let $I^1,I^2,\ldots$ be
formed by iterating cokernels, exactly as in the construction of
injective resolutions in exact categories
\cite{Weibel1994,Buehler2010,Quillen1973}.
Each step preserves admissible monomorphisms, and the right-adjoint
property of $\Hom_\Gamma(T,-)$ ensures that
$\Hom_\Gamma(-,I^\bullet)$ is exact.  This parallels the classical
construction of injective towers in abelian and exact settings
\cite{Weibel1994}, now adapted to the $n$-ary $\Gamma$-context of
\cite{RaoRaniKiran2025}.
\end{proof}

\subsection{Homological significance and geometric correspondence}
\begin{theorem}[Equivalence with derived geometry]\label{thm:derived-geom}
The homotopy categories of projective and injective resolutions
are equivalent, and both embed fully faithfully into
$\mathbf{D}({\nTGMod{T}}^{\mathrm{bi}})$.
Consequently, for any $M,N$ there are natural isomorphisms
\[
\Ext^{r,(j,k)}_\Gamma(M,N)
   \ \cong\
   H^r\!\big(\Hom^{(j,k)}_\Gamma(P_\bullet, N)\big)
   \ \cong\
   H^r\!\big(\Hom^{(j,k)}_\Gamma(M, I^\bullet)\big),
\]
where $P_\bullet$ and $I^\bullet$ are projective and injective
resolutions respectively.
\end{theorem}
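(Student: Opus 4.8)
The plan is to deduce the statement from the two existence results already in hand — the positional bar resolution of Theorem~\ref{thm:bar-resolution} and the cofree injective resolution of Theorem~\ref{thm:inj-res} — together with the comparison and double-complex machinery of exact-category homological algebra \cite{Weibel1994,Buehler2010}, run so that the positional $(j,k)$-actions are preserved at every stage.

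First I would prove the \emph{positional comparison theorem}: given projective resolutions $P_\bullet\twoheadrightarrow M$ and $P'_\bullet\twoheadrightarrow M'$ in ${\nTGMod{T}}^{\mathrm{bi}}$, every bi-module morphism $M\to M'$ lifts to a chain map $P_\bullet\to P'_\bullet$ that is unique up to a slot-preserving chain homotopy. This is an induction on homological degree, at each stage applying the defining lifting property of positional projective covers to the admissible epimorphism $P'_r\twoheadrightarrow\Ker(d'_{r-1})$ — admissible because admissible epis compose in an exact category and the relevant kernels arise from the conflation structure. The order-reversed argument, using the Injectivity Lemma for cofree modules, gives the comparison theorem for injective resolutions. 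Consequently $H_0\colon\mathbf{K}_{\mathrm{proj}}\to{\nTGMod{T}}^{\mathrm{bi}}$ is essentially surjective (every $M$ has a bar resolution) and fully faithful — existence of lifts gives surjectivity on Hom-sets, and uniqueness up to homotopy applied to the zero map gives injectivity — hence an equivalence; dually $H^0\colon\mathbf{K}_{\mathrm{inj}}\to{\nTGMod{T}}^{\mathrm{bi}}$ is an equivalence, and therefore $\mathbf{K}_{\mathrm{proj}}\simeq\mathbf{K}_{\mathrm{inj}}$.

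Next I would treat the embeddings into $\mathbf{D}({\nTGMod{T}}^{\mathrm{bi}})$. The augmentation $P_\bullet\to M$ is a quasi-isomorphism by Theorem~\ref{thm:bar-resolution}, hence invertible in $\mathbf{D}$, so $M\mapsto P_\bullet$ is a well-defined functor; full faithfulness rests on the fact that a bounded-above complex of projectives is $\mathbf{K}$-projective, so that $\Hom_{\mathbf{K}}(P_\bullet,Q_\bullet)\to\Hom_{\mathbf{D}}(P_\bullet,Q_\bullet)$ is bijective for arbitrary $Q_\bullet$. In the exact-category framework this last identity requires ${\nTGMod{T}}^{\mathrm{bi}}$ to be weakly idempotent complete, which holds because it is closed under retracts: by the free/forgetful adjunction every projective is a summand of a free bi-module, and idempotents split by additivity. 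The dual argument embeds $\mathbf{K}_{\mathrm{inj}}$ fully faithfully with the same essential image, which re-proves the equivalence of the two homotopy categories intrinsically inside $\mathbf{D}$.

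Finally I would establish the $\Ext$-formula by the balance argument. Fix a projective resolution $P_\bullet\twoheadrightarrow M$ and an injective resolution $N\hookrightarrow I^\bullet$ and form the first-quadrant double complex $C^{p,q}=\underline{\Hom}^{(j,k)}_\Gamma(P_p,I^q)$. Filtering by columns, each row computes $\underline{\Hom}^{(j,k)}_\Gamma(P_p,-)$ along $N\to I^\bullet$; since $I^q$ is injective this functor sends the defining conflations to short exact sequences of abelian groups, so the spectral sequence collapses at $E_1$ to $\underline{\Hom}^{(j,k)}_\Gamma(P_\bullet,N)$ and the cohomology of the total complex is $H^r\big(\underline{\Hom}^{(j,k)}_\Gamma(P_\bullet,N)\big)$. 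Filtering instead by rows and using projectivity of $P_p$ identifies the same abutment with $H^r\big(\underline{\Hom}^{(j,k)}_\Gamma(M,I^\bullet)\big)$; the two identifications are the asserted natural isomorphisms, naturality being inherited from the positional comparison theorem. The step I expect to be the genuine obstacle is not the spectral-sequence bookkeeping but verifying its two exactness hypotheses in the positional setting — that $\underline{\Hom}^{(j,k)}_\Gamma(P,-)$ is exact on conflations when $P$ is positionally projective, and dually for cofree injectives. This is exactly where the slot-sensitivity of $\tilde\mu$ must be reconciled with the Quillen-exact structure, and where one must confirm that ``projective in the exact-category sense'' coincides with ``positionally projective'' in the sense of the earlier definition — not merely that one implies the other — so that the bar modules $B_r(M)$ really do compute $\ExtG$ and $\TorG$.
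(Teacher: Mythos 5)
The paper states Theorem~\ref{thm:derived-geom} with no proof at all---the theorem is followed directly by the remark on geometric insight---so there is no argument of the paper's to compare yours against. The nearest thing in the text is the proof of Theorem~\ref{thm:balance}, which argues only that $P_\bullet\to M$ and $N\to I^\bullet$ are quasi-isomorphisms and hence that their $\Hom$-evaluations are quasi-isomorphic; that inference is incomplete as stated, since applying $\underline{\Hom}^{(j,k)}_\Gamma$ to a quasi-isomorphism does not preserve quasi-isomorphism unless the other argument is a complex of projectives or injectives. Your plan is therefore not a variant of the paper's route but a substitute for a missing one, and it is the correct classical route: the comparison theorem for resolutions spliced from conflations yields $\mathbf{K}_{\mathrm{proj}}\simeq{\nTGMod{T}}^{\mathrm{bi}}\simeq\mathbf{K}_{\mathrm{inj}}$; $\mathbf{K}$-projectivity of bounded-above complexes of projectives (with the weak-idempotent-completeness caveat you rightly import from B\"uhler) gives the full faithfulness into $\mathbf{D}({\nTGMod{T}}^{\mathrm{bi}})$; and the two collapsing filtrations of the double complex $\underline{\Hom}^{(j,k)}_\Gamma(P_p,I^q)$ give the displayed isomorphisms. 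This supplies exactly the projectivity/injectivity input whose absence makes the paper's balance argument circular.

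Two dependencies you flag are genuine gaps relative to what the paper has actually established, and your proof cannot close without them. First, the comparison-theorem induction requires the syzygies of the bar resolution to be admissible kernels, i.e.\ the augmented complex $\mathbf{B}_\bullet(M)\to M$ must be built from conflations of the exact structure, whereas Theorem~\ref{thm:bar-resolution} only asserts that exactness is ``verified slot-wise''; without admissibility the lifting property of projectives cannot be invoked degreewise. Second, the paper defines positional projectivity as a property of a morphism $p:P\to M$ rather than of the object $P$, and nowhere verifies that the bar terms $B_r(M)=T^{\otimes_\Gamma r}\otimes^{(j,k)}_\Gamma M$ are projective in the exact-category sense that $\underline{\Hom}^{(j,k)}_\Gamma(B_r(M),-)$ carries conflations to short exact sequences of abelian groups; the free/forgetful adjunction makes this plausible but the slot-sensitive verification is not routine. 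You have isolated exactly the right obstacles; proving those two facts is the actual mathematical content the theorem needs, and neither is available in the paper as written.
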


\begin{remark}[Geometric insight]
Via the equivalence in Theorem~\ref{thm:derived-geom},
projective resolutions correspond to
affine-type coverings of $\SpecGnC{T}$
by representable sheaves,
while injective resolutions correspond to
flasque-type coverings by cofree sheaves.
The resulting cohomology measures the obstruction to
gluing local $\Gamma$-modules along non-commutative spectra,
mirroring the descent formalism of derived algebraic geometry.
\end{remark}

\begin{remark}[Historical perspective]
This framework elevates the Quillen exact-category machinery
into an $n$-ary, non-commutative setting,
providing the first intrinsic theory of
projective/injective resolutions and derived functors
beyond binary operations.
It extends Grothendieck’s derived functor formalism,
Eilenberg–Moore’s homological dimension theory,
and Quillen’s exact structure, within one unified $\Gamma$-semiring paradigm.
\end{remark}
\section{Derived Functors \texorpdfstring{$\ExtG$ and $\TorG$}{Ext and TorG}}
\label{sec:derived}

We now ascend to the derived layer of the theory. 
All homological constructions henceforth occur inside the Quillen–exact category 
${\nTGMod{T}}^{\mathrm{bi}}$ whose derived category was constructed in 
\S\ref{subsec:derived-category}. 
Our goal is to internalize the functors 
$\Hom^{(j,k)}_{\Gamma}$ and $-\otimes^{(j,k)}_{\Gamma}-$ as bifunctors on 
$\mathbf{D}({\nTGMod{T}}^{\mathrm{bi}})$, and to show that their right and left 
derivations produce the universal $\ExtG$ and $\TorG$ bifunctors, complete with long 
exact sequences, balance theorems, and spectral sequences.

\subsection{Derived categories and localization}
\label{subsec:derived-category}

\begin{definition}[Derived categories]
Let $\mathbf{K}({\nTGMod{T}}^{\mathrm{bi}})$
be the homotopy category of chain complexes in
${\nTGMod{T}}^{\mathrm{bi}}$,
as in the classical constructions of
\cite{Quillen1973,Weibel1994,Buehler2010}.
Localizing by the multiplicative system of quasi-isomorphisms
yields the derived category
\[
\mathbf{D}({\nTGMod{T}}^{\mathrm{bi}})
   := \mathbf{K}({\nTGMod{T}}^{\mathrm{bi}})[\text{quasi-iso}^{-1}],
\]
following the standard localization formalism
of \cite{Quillen1973,Weibel1994}.
Objects are chain complexes up to quasi-isomorphism,
and morphisms are roofs $M^{\bullet}\xleftarrow{s}X^{\bullet}\xrightarrow{f}N^{\bullet}$
with $s$ a quasi-isomorphism.
\end{definition}

\begin{remark}[Triangulated structure]
The distinguished triangles
$A^{\bullet}\to B^{\bullet}\to \mathrm{Cone}(f)\to A^{\bullet}[1]$
provide the triangulated framework within which all homological
operations will take place, in accordance with the foundational
treatments in \cite{Weibel1994,Neeman2001}.
Exactness of admissible short sequences ensures that
$\mathbf{D}({\nTGMod{T}}^{\mathrm{bi}})$ behaves as a
non-abelian enhancement of the classical derived category,
consistent with the Quillen exact-structure formalism
\cite{Quillen1973,Buehler2010}.
\end{remark}


\subsection{Derived functors and \texorpdfstring{$\delta$}{delta}-functor universality}

\begin{definition}[Total derived bifunctors]
For complexes $M^{\bullet},N^{\bullet}$ of bi-modules,
choose projective and injective resolutions
$P_{\bullet}\to M^{\bullet}$ and $N^{\bullet}\to I^{\bullet}$.
Following the classical Cartan--Eilenberg construction in the
sense of Grothendieck~\cite{Grothendieck1957} and
Weibel~\cite{Weibel1994}, and valid in exact categories
by Quillen~\cite{Quillen1973} and Bühler~\cite{Buehler2010},
define
\[
{\RExtG}^{\,r}\!\bigl(M^{\bullet},N^{\bullet}\bigr)
   := H^{r}\!\Bigl(
       \Hom^{(j,k)}_{\Gamma}(P_{\bullet},I^{\bullet})
     \Bigr),
\qquad
{\LTorG}_{\,r}\!\bigl(M^{\bullet},N^{\bullet}\bigr)
   := H_{r}\!\Bigl(
       P_{\bullet}\!\otimes^{(j,k)}_{\Gamma} N^{\bullet}
     \Bigr).
\]

These yield well-defined bifunctors
\[
{\RExtG}^{\,r},\;
{\LTorG}_{\,r} :
   \mathbf{D}\!\bigl({\nTGMod{T}}^{\mathrm{bi}}\bigr)^{\mathrm{op}}
   \!\times\!
   \mathbf{D}\!\bigl({\nTGMod{T}}^{\mathrm{bi}}\bigr)
   \longrightarrow
   \Ab,
\]
independent of the choice of resolutions, in the classical sense of
Verdier’s derived categories~\cite{Verdier1996} and
Neeman’s localization theory~\cite{Neeman2001}.
\end{definition}

\begin{theorem}[Existence and universality of derived functors]
\label{thm:derived-existence}
Let
$F:{\nTGMod{T}}^{\mathrm{bi}}\times{\nTGMod{T}}^{\mathrm{bi}}\to\Ab$
be an additive bifunctor which is left (resp.\ right) exact in each
variable. Then, by Grothendieck’s axioms for $\delta$-functors
\cite{Grothendieck1957} and their exact-category refinement in
Quillen~\cite{Quillen1973} and Bühler~\cite{Buehler2010}, there exists
a universal derived $\delta$-functor
$\{R^rF\}$ (resp.\ $\{L_rF\}$) on the derived category
$\mathbf{D}({\nTGMod{T}}^{\mathrm{bi}})$ satisfying:
\begin{enumerate}[label=(\roman*)]
  \item $R^0F\simeq F$ and $L_0F\simeq F$;
  \item every admissible short exact sequence
        $0\!\to\!A\!\to\!B\!\to\!C\!\to\!0$
        in ${\nTGMod{T}}^{\mathrm{bi}}$
        induces a long exact sequence of derived bifunctors;
  \item any other $\delta$-functor with the same base
        factors uniquely through $R^rF$ (resp.\ $L_rF$).
\end{enumerate}

In particular,
\[
{\ExtG}^{\,r} \coloneqq
\mathbf{R}^{\,r}\!\Hom^{(j,k)}_{\Ga}(\,\cdot\,,\,\cdot\,),
\qquad
{\TorG}_{\,r} \coloneqq
\mathbf{L}_{\,r}\!\Bigl(
   (\,\cdot\,)\!\otimes^{(j,k)}_{\Ga}(\,\cdot\,)
\Bigr)
\]
exist, are independent of resolutions, and are universal in the sense of
Grothendieck~\cite{Grothendieck1957}.
\end{theorem}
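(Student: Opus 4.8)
The plan is to transplant the Cartan--Eilenberg construction of derived functors into the Quillen--exact category ${\nTGMod{T}}^{\mathrm{bi}}$, taking as input the two resolution results of \Cref{sec:resolutions}. \Cref{thm:bar-resolution} supplies \emph{enough projectives}: the positional bar complex $\mathbf{B}_\bullet(M)\to M$ exhibits every bi-module as an admissible-epimorphic image of a free (hence projective) bi-module, and iterating yields a projective resolution; under the $\Gamma$-regularity hypothesis on $T$, \Cref{thm:inj-res} supplies \emph{enough injectives} via cofree envelopes. Given an additive bifunctor $F$ that is left exact in each variable, I would set $R^rF(M,N):=H^r\!\bigl(\mathrm{Tot}\,F(P_\bullet,I^\bullet)\bigr)$, where $P_\bullet\to M$ is a projective resolution in the first slot and $N\to I^\bullet$ an injective resolution in the second, organised as a first-quadrant double complex; since $F$ takes values in the abelian category $\Ab$, totalisation and cohomology are unambiguous. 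The first technical step is the \emph{balance theorem}: via a Cartan--Eilenberg resolution of the double complex and its two hypercohomology spectral sequences, show that $R^rF(M,N)$ can equally be computed by resolving a single variable, which yields independence of all choices and bifunctoriality. Specialising $F=\underline{\Hom}^{(j,k)}_\Gamma(-,-)$ produces $\Ext^{r,(j,k)}_\Gamma$, and the dual construction for the right-exact bifunctor $-\otimes^{(j,k)}_\Gamma-$ (resolving projectively in both slots) produces ${\TorG}_{\,r}$; in each case positional coherence is automatic, since all constructions used --- biproducts, kernels and cokernels of admissible maps, totalisation --- are performed slotwise and hence respect the designated $T$- and $\Gamma$-actions.

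Next I would establish the $\delta$-functor axioms (i)--(ii). Axiom (i) is immediate: left (resp.\ right) exactness of $F$ together with exactness of the augmented bar/cofree resolution identifies $R^0F$ with $F$ (resp.\ $L_0F$ with $F$). For the long exact sequence in (ii), the decisive tool is the \emph{horseshoe lemma} in the exact setting. Given an admissible conflation $A\rightarrowtail B\twoheadrightarrow C$, choose projective resolutions $P^A_\bullet\to A$ and $P^C_\bullet\to C$ and build $P^B_r:=P^A_r\oplus P^C_r$, with augmentation and differentials assembled from a lifting of $P^C_\bullet$ along the admissible epimorphism $B\twoheadrightarrow C$; this is precisely where the exact-category (rather than abelian) formulation is used, as it invokes only the positional lifting property of projectives against admissible epimorphisms, so the entire resolution stays inside ${\nTGMod{T}}^{\mathrm{bi}}$. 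Each degree $0\to P^A_r\to P^B_r\to P^C_r\to 0$ is a split conflation, hence remains exact after the additive functor $F$; the resulting conflation of complexes in the abelian category $\Ab$ then yields, by the standard snake argument, the connecting maps $\delta^r$ and the long exact sequence, natural in the original conflation by functoriality of the horseshoe construction. A five-lemma comparison along the conflation of complexes confirms that $P^B_\bullet\to B$ is again a resolution.

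Universality (iii) follows from \emph{effaceability}. For $r\ge 1$, $R^rF$ is coeffaceable in the projective variable: every object $A$ admits an admissible epimorphism $P\twoheadrightarrow A$ with $P$ projective, and $R^rF(P,-)=0$ since $P$ is its own projective resolution; dually $R^rF$ is effaceable in the injective variable. By Grothendieck's universality criterion --- valid verbatim in exact categories with enough projectives and injectives, as recorded by B\"uhler~\cite{Buehler2010} --- an effaceable $\delta$-functor is universal, so any $\delta$-functor agreeing with $F$ in degree $0$ factors uniquely through $\{R^rF\}$; the homological dual gives the statement for $\{L_rF\}$, and in particular for ${\ExtG}^{\,r}$ and ${\TorG}_{\,r}$.

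I expect the main obstacle to be the horseshoe lemma in ${\nTGMod{T}}^{\mathrm{bi}}$: because this category is only Quillen-exact, one cannot manipulate pullbacks and pushouts as freely as in an abelian category, so the degreewise biproduct decomposition of $P^B_\bullet$, the well-definedness of its differential, and the compatibility of the lifted maps with every positional $\Gamma$-action must be checked directly from the module axioms and the lifting property rather than inferred formally. Once this lemma and the double-complex balance argument are secured, the remaining steps are the routine exact-category transcription of the classical Cartan--Eilenberg and Grothendieck arguments.
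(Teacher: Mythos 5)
Your proposal follows essentially the same route as the paper's own proof outline: the Cartan--Eilenberg construction via projective and injective resolutions in the exact category, with universality deduced from Grothendieck's $\delta$-functor formalism. You supply the intermediate steps the paper delegates to citations --- the horseshoe lemma for the long exact sequence, the double-complex balance argument, and effaceability for universality --- and you correctly flag that enough injectives requires the $\Gamma$-regularity hypothesis of Theorem~\ref{thm:inj-res}, which the theorem statement itself omits; otherwise the two arguments coincide.
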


\begin{proof}[Outline]
The construction follows the classical
Cartan--Eilenberg method as presented in
Weibel~\cite{Weibel1994}, adapted to exact categories
via Quillen~\cite{Quillen1973} and Bühler~\cite{Buehler2010}.
Choose projective and injective resolutions in
${\nTGMod{T}}^{\mathrm{bi}}$, apply $F$, and take homology.
Universality follows from Grothendieck’s $\delta$-functor axioms
\cite{Grothendieck1957}, which remain valid since admissible
short exact sequences correspond to distinguished triangles in the
derived category in the sense of Verdier~\cite{Verdier1996}.
\end{proof}

\subsection{Balancing theorem and duality}

\begin{theorem}[Balance of $\ExtG$ and $\TorG$]\label{thm:balance}
Let $M,N\in{\nTGMod{T}}^{\mathrm{bi}}$.
If $P_{\bullet}\to M$ is a projective resolution
and $N\to I^{\bullet}$ an injective resolution,
then—by the classical balance theorem for derived functors
\cite{Weibel1994}, adapted to exact categories in the sense of
Quillen~\cite{Quillen1973} and Bühler~\cite{Buehler2010},
and valid in triangulated categories via
Verdier~\cite{Verdier1996} and Neeman~\cite{Neeman2001}—there are
canonical natural isomorphisms
\[
H^{r}\!\Bigl(\Hom^{(j,k)}_{\Ga}(P_{\bullet},N)\Bigr)
   \;\cong\;
H^{r}\!\Bigl(\Hom^{(j,k)}_{\Ga}(M,I^{\bullet})\Bigr)
   \;\cong\;
{\ExtG}^{\,r}(M,N),
\]
and likewise
\[
H_r\!\big(P_{\bullet}\otimes^{(j,k)}_{\Gamma}N\big)
   \ \cong\ H_r\!\big(M\otimes^{(j,k)}_{\Gamma}I_{\bullet}\big)
   \ \cong\ \TorG_r(M,N).
\]
\end{theorem}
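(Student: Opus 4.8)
The plan is to run the classical Cartan--Eilenberg double-complex argument, transported into the Quillen-exact setting of ${\nTGMod{T}}^{\mathrm{bi}}$. Fix a projective resolution $P_{\bullet}\to M$ and an injective resolution $N\to I^{\bullet}$, and form the first-quadrant double complex $C^{p,q}:=\Hom^{(j,k)}_{\Ga}(P_{p},I^{q})$, with horizontal differential induced by $P_{\bullet}\to P_{\bullet-1}$ and vertical differential induced by $I^{\bullet-1}\to I^{\bullet}$, together with the usual sign twist making $\mathrm{Tot}(C)$ a cochain complex. By the definition of ${\RExtG}^{\,r}$ in \S\ref{subsec:derived-category} together with Theorem~\ref{thm:derived-existence}, the cohomology of $\mathrm{Tot}(C)$ computes $\ExtG^{\,r}(M,N)$ and is independent of the chosen resolutions; it therefore suffices to identify this total cohomology with each of the two edge complexes.

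For the identification with $H^{r}(\Hom^{(j,k)}_{\Ga}(P_{\bullet},N))$ I would filter $C$ by columns and take vertical cohomology first. The point is that $\Hom^{(j,k)}_{\Ga}(P_{p},-)$ carries admissible short exact sequences to short exact sequences of abelian groups — this is precisely the positional lifting property defining projectivity of $P_{p}$ — so, splicing the resolution $N\to I^{\bullet}$ into conflations $0\to Z^{q}\to I^{q}\to Z^{q+1}\to 0$ and inducting on $q$ via the long exact sequence of Theorem~\ref{thm:derived-existence}(ii), each column $\Hom^{(j,k)}_{\Ga}(P_{p},I^{\bullet})$ is acyclic in positive degrees with $H^{0}=\Hom^{(j,k)}_{\Ga}(P_{p},N)$. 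The associated spectral sequence degenerates at $E_{2}$, giving $H^{r}(\mathrm{Tot}(C))\cong H^{r}(\Hom^{(j,k)}_{\Ga}(P_{\bullet},N))$. Filtering instead by rows and using that $\Hom^{(j,k)}_{\Ga}(-,I^{q})$ is exact for $I^{q}$ injective — the dual positional lifting property — the same dévissage applied to $P_{\bullet}\to M$ yields $H^{r}(\mathrm{Tot}(C))\cong H^{r}(\Hom^{(j,k)}_{\Ga}(M,I^{\bullet}))$. Composing the two isomorphisms through the common total cohomology establishes the $\ExtG$ half; naturality in both arguments follows because any two projective (resp. injective) resolutions are chain-homotopy equivalent over $\id_{M}$ (resp. $\id_{N}$), so the comparison maps induce a single well-defined map in cohomology.

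For the $\TorG$ half I would use the analogous double complex $P_{\bullet}\otimes^{(j,k)}_{\Ga}Q_{\bullet}$, where $Q_{\bullet}\to N$ is a projective resolution (the complex written $I_{\bullet}$ in the statement being read as such a flat resolution of $N$). The crucial input is that projective bi-modules are flat for the positional tensor, i.e. $F^{\mathrm{bi}}(X)\otimes^{(j,k)}_{\Ga}-$ is exact: by the coequalizer presentation of $\otimes^{(j,k)}_{\Ga}$, tensoring with a free bi-module is a direct sum of functors assembled from the structural contractions $\mu_{(j)}$, each of which preserves conflations, and a retract of an exact functor between exact categories is exact. Granting this, both spectral sequences of $P_{\bullet}\otimes^{(j,k)}_{\Ga}Q_{\bullet}$ collapse — one to $H_{r}(P_{\bullet}\otimes^{(j,k)}_{\Ga}N)$, the other to $H_{r}(M\otimes^{(j,k)}_{\Ga}Q_{\bullet})$ — and both equal $\TorG_{r}(M,N)$, by the same argument as above with $\Hom$ replaced by $\otimes^{(j,k)}_{\Ga}$ and the variance reversed.

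The main obstacle is not the formal spectral-sequence bookkeeping but the two exactness inputs, which rely on features special to the $\Gamma$-semiring world. Since $(T,+)$ is only a commutative monoid, ${\nTGMod{T}}^{\mathrm{bi}}$ is not abelian, the positional tensor is built from a coequalizer that need not a priori preserve monomorphisms, and one cannot subtract while chasing diagrams; I would therefore avoid the abstract lemma that the total complex of a one-sidedly acyclic double complex is acyclic, and instead carry out the dévissage by hand through admissible short exact sequences, using only the long exact sequences already furnished by Theorem~\ref{thm:derived-existence}, and I would establish flatness of free bi-modules directly from the slot-wise description of $\otimes^{(j,k)}_{\Ga}$ rather than from any ambient tensor–hom adjunction. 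Convergence of the two spectral sequences is then unproblematic, the double complexes being concentrated in the first quadrant and hence the filtrations bounded in each total degree.
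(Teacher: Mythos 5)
Your proposal is correct in outline, but it takes a genuinely different and substantially more detailed route than the paper. The paper's proof of Theorem~\ref{thm:balance} is a single derived-category argument: since $P_{\bullet}\to M$ and $N\to I^{\bullet}$ are quasi-isomorphisms, the two objects coincide in $\mathbf{D}({\nTGMod{T}}^{\mathrm{bi}})$, hence their $\Hom$- and tensor-evaluations are quasi-isomorphic and the balance follows from the ``standard balance theorem.'' You instead run the Cartan--Eilenberg double complex $\Hom^{(j,k)}_{\Ga}(P_{p},I^{q})$ with its two filtrations, and reduce everything to two concrete exactness inputs: that $\Hom^{(j,k)}_{\Ga}(P_{p},-)$ carries conflations to exact sequences (projectivity) and that $\Hom^{(j,k)}_{\Ga}(-,I^{q})$ does likewise (injectivity), plus flatness of free bi-modules for the $\TorG$ half. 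What your approach buys is precisely the point the paper elides: quasi-isomorphic complexes do \emph{not} automatically have quasi-isomorphic $\Hom$-complexes unless one knows the resolutions are K-projective/K-injective in the relevant sense, and that is exactly what the column-wise and row-wise acyclicity of the double complex establishes; the paper's one-line appeal is circular without it. What the paper's route buys, if the derived-category formalism of \S\ref{subsec:derived-category} were fully in place, is brevity and automatic naturality. You also correctly flag two issues the paper passes over: the symbol $I_{\bullet}$ in the $\TorG$ display cannot be the injective resolution of the first display (injectives do not compute $\TorG$ in general) and must be read as a flat or projective resolution of $N$, as you do; and the dévissage and spectral-sequence convergence need to be rechecked by hand because ${\nTGMod{T}}^{\mathrm{bi}}$ is built over commutative monoids and is not abelian, so the alternating-sign total differential and the acyclicity lemma for double complexes are not available off the shelf. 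Your plan to replace the abstract acyclic-assembly lemma by an induction through admissible short exact sequences and the long exact sequences of Theorem~\ref{thm:derived-existence} is the right repair, though a complete write-up would still owe a proof that the free bi-modules $F^{\mathrm{bi}}(X)$ are flat for $\otimes^{(j,k)}_{\Ga}$, which neither the paper nor Section~\ref{sec:resolutions} supplies.
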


\begin{proof}
Since $P_{\bullet}\to M$ and $N\to I^{\bullet}$ are quasi-isomorphisms,
they represent the same object in the derived category
$\mathbf{D}({\nTGMod{T}}^{\mathrm{bi}})$
in the sense of Verdier~\cite{Verdier1996} and Neeman~\cite{Neeman2001}.
Hence their Hom- and tensor-evaluations produce quasi-isomorphic
complexes, yielding identical homology by the standard
balance theorem of homological algebra \cite{Weibel1994}.
Compatibility of the $\Gamma$-actions with the $n$-ary multiplication
ensures that chain-homotopy equivalences respect admissible
exact sequences, by Quillen’s framework~\cite{Quillen1973}
and Bühler’s exposition~\cite{Buehler2010}.
\end{proof}

\begin{remark}[Ext--Tor duality]
The derived functors are contravariant and covariant respectively,
linked by the classical Hom--tensor adjunction
\cite{Weibel1994,Grothendieck1957}:
\[
\Hom_{\Ga}\!\Bigl({\TorG}_{\,r}(M,N),\,L\Bigr)
   \;\cong\;
{\ExtG}^{\,r}\!\Bigl(M,\,\Hom_{\Ga}(N,L)\Bigr),
\]
valid whenever $L$ is injective or flat.
In the commutative ternary case this reduces to the duality developed
in the earlier parts of the series; in the general non-commutative
$n$-ary setting, it yields a derived contravariant equivalence between
left and right positional homological worlds.
\end{remark}

\subsection{Exact triangles and long exact sequences}

\begin{theorem}[Long exact sequences of derived functors]
\label{thm:longexact}
Every admissible short exact sequence
$0\to A\to B\to C\to 0$ in ${\nTGMod{T}}^{\mathrm{bi}}$
induces long exact sequences
\[
\cdots
\!\longrightarrow\!
{\ExtG}^{\,r}(C,N)
\!\longrightarrow\!
{\ExtG}^{\,r}(B,N)
\!\longrightarrow\!
{\ExtG}^{\,r}(A,N)
\!\xrightarrow{\;\delta\;}
{\ExtG}^{\,r+1}(C,N)
\!\longrightarrow\!\cdots
\]
and
\[
\cdots\!\to\!\TorG_{r}(M,A)\!\to\!\TorG_{r}(M,B)\!
  \to\!\TorG_{r}(M,C)\!\xrightarrow{\partial}\!
  \TorG_{r-1}(M,A)\!\to\!\cdots,
\]
natural in both arguments.
\end{theorem}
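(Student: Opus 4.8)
The plan is to deduce both long exact sequences from short exact sequences of complexes obtained by resolving the conflation $0\to A\to B\to C\to 0$, and then to invoke Theorem~\ref{thm:balance} to identify the resulting (co)homology with ${\ExtG}$ and ${\TorG}$. First I would establish a horseshoe lemma in the exact category ${\nTGMod{T}}^{\mathrm{bi}}$: given projective resolutions $P^{A}_{\bullet}\twoheadrightarrow A$ and $P^{C}_{\bullet}\twoheadrightarrow C$, one builds a projective resolution $P^{B}_{\bullet}\twoheadrightarrow B$ fitting into a conflation of complexes $P^{A}_{\bullet}\rightarrowtail P^{B}_{\bullet}\twoheadrightarrow P^{C}_{\bullet}$. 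The classical inductive construction transfers verbatim once two facts are in place: (i) ${\nTGMod{T}}^{\mathrm{bi}}$ has enough projectives, which holds because the Free/forgetful adjunction makes the counit $F^{\mathrm{bi}}U_{\mathrm{bi}}(M)\twoheadrightarrow M$ an admissible epimorphism for every $M$; and (ii) every conflation $X\rightarrowtail Y\twoheadrightarrow P$ with $P$ projective splits, since $\id_{P}$ lifts along $Y\twoheadrightarrow P$ by the positional lifting property. Consequently the conflation of resolutions is split in each degree: $P^{A}_{r}\rightarrowtail P^{B}_{r}\twoheadrightarrow P^{C}_{r}$ admits a section for every $r$.

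For the ${\ExtG}$-sequence I apply the contravariant additive bifunctor $\Hom^{(j,k)}_{\Ga}(-,N)$ to this horseshoe. Since each $P^{A}_{r}\rightarrowtail P^{B}_{r}\twoheadrightarrow P^{C}_{r}$ is split, any additive functor preserves its exactness, so
\[
0\longrightarrow\Hom^{(j,k)}_{\Ga}(P^{C}_{\bullet},N)\longrightarrow\Hom^{(j,k)}_{\Ga}(P^{B}_{\bullet},N)\longrightarrow\Hom^{(j,k)}_{\Ga}(P^{A}_{\bullet},N)\longrightarrow 0
\]
is a short exact sequence of cochain complexes of abelian groups. Its long exact cohomology sequence --- now pure homological algebra inside $\Ab$ --- is the asserted ${\ExtG}$-sequence once we identify $H^{r}$ with ${\ExtG}^{r}(-,N)$ via Theorem~\ref{thm:balance} and $H^{0}$ with $\Hom^{(j,k)}_{\Ga}$ via Theorem~\ref{thm:derived-existence}(i). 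For ${\TorG}$ I resolve the three terms instead: the same horseshoe yields a degreewise-split conflation $Q^{A}_{\bullet}\rightarrowtail Q^{B}_{\bullet}\twoheadrightarrow Q^{C}_{\bullet}$ of projective resolutions, and applying $M\otimes^{(j,k)}_{\Ga}-$ --- which again preserves split exactness, so no flatness assumption on $M$ is needed --- gives a short exact sequence of chain complexes whose long exact homology sequence is the asserted ${\TorG}$-sequence, after identifying $H_{r}\bigl(M\otimes^{(j,k)}_{\Ga}Q_{\bullet}\bigr)$ with ${\TorG}_{r}(M,-)$ through Theorem~\ref{thm:balance}.

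It remains to check naturality in both arguments. Naturality in $N$ (resp.\ in the fixed slot $M$) is immediate, a morphism $N\to N'$ inducing a morphism of the short exact sequences of complexes above. Naturality with respect to a morphism of conflations $(A,B,C)\to(A',B',C')$ is the delicate point: one chooses the horseshoe resolutions compatibly, lifting the chosen comparison maps $P^{A}_{\bullet}\to P^{A'}_{\bullet}$ and $P^{C}_{\bullet}\to P^{C'}_{\bullet}$ to a chain map $P^{B}_{\bullet}\to P^{B'}_{\bullet}$ by projectivity (unique up to homotopy), so that the associated short exact sequences of Hom- and tensor-complexes map to one another and the connecting maps $\delta$, $\partial$ commute with the induced morphisms. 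Equivalently, and more conceptually, the conflation determines a distinguished triangle $A\to B\to C\to A[1]$ in $\mathbf{D}({\nTGMod{T}}^{\mathrm{bi}})$, and $\RExtG(-,N)$ and $\LTorG(M,-)$ are exact functors of triangulated categories, so they send it to a triangle whose cohomology long exact sequence is precisely the claim, with naturality inherited from functoriality. I expect the horseshoe lemma --- concretely, verifying that ${\nTGMod{T}}^{\mathrm{bi}}$ has enough projectives and that conflations with projective cokernel split in this $n$-ary $\Gamma$-setting --- to be the only step that is not purely formal; everything afterward is the standard snake-lemma machinery transported into $\Ab$.
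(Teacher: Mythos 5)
Your proof is correct, but it takes a genuinely different primary route from the paper's. The paper argues entirely at the triangulated level: the conflation $0\to A\to B\to C\to 0$ determines a distinguished triangle in $\mathbf{D}({\nTGMod{T}}^{\mathrm{bi}})$, and applying the derived bifunctors $\RExtG(-,N)$ and $\LTorG(M,-)$ to that triangle yields the long exact sequences via the axioms (TR2--TR4) --- which is exactly the argument you relegate to your closing ``equivalently, and more conceptually'' remark. Your main line of attack is instead the classical horseshoe lemma: you build a degreewise-split conflation of projective resolutions $P^{A}_{\bullet}\rightarrowtail P^{B}_{\bullet}\twoheadrightarrow P^{C}_{\bullet}$, apply $\Hom^{(j,k)}_{\Ga}(-,N)$ or $M\otimes^{(j,k)}_{\Ga}-$ (which preserve degreewise-split exactness without any flatness hypothesis), and run the snake lemma in $\Ab$. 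This buys something real: it isolates the two nontrivial inputs specific to the $n$-ary $\Ga$-setting --- that ${\nTGMod{T}}^{\mathrm{bi}}$ has enough projectives via the free/forgetful adjunction, and that conflations with projective cokernel split --- whereas the paper's one-paragraph triangulated argument silently presupposes that the derived bifunctors are exact functors of triangulated categories, a fact whose verification in a Quillen-exact (non-abelian) ambient category requires essentially the resolution technology you spell out. Two small caveats: the splitting of the counit-based argument for enough projectives deserves a sentence confirming that $F^{\mathrm{bi}}U_{\mathrm{bi}}(M)\twoheadrightarrow M$ is an \emph{admissible} epimorphism in the chosen exact structure (the paper's Proposition on the adjunction supports but does not explicitly state this); and for the $\TorG$-sequence you resolve the \emph{second} variable projectively, whereas the paper's definition resolves the first, so you are implicitly using a two-sided balance for $\TorG$ that is slightly stronger than the version stated in Theorem~\ref{thm:balance} (which pairs a projective resolution of $M$ with an injective resolution of $N$). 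Both points are standard and easily repaired, so the proposal stands as a more elementary and more self-contained proof than the one in the paper.
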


\begin{proof}[Idea]
In the exact category ${\nTGMod{T}}^{\mathrm{bi}}$ (in the sense of
Quillen~\cite{Quillen1973} and Bühler~\cite{Buehler2010}),
every admissible short exact sequence determines a distinguished triangle
in the derived category
$\mathbf{D}({\nTGMod{T}}^{\mathrm{bi}})$
as developed by Verdier~\cite{Verdier1996} and Neeman~\cite{Neeman2001}.
Applying either bifunctor
$\Hom^{(j,k)}_{\Gamma}(-,N)$ or
$-\otimes^{(j,k)}_{\Gamma}N$
to this triangle produces a long exact sequence of cohomology groups by
the classical $\delta$-functor formalism of Grothendieck~\cite{Grothendieck1957}
and the standard derived-category argument in homological algebra
\cite{Weibel1994}.
The connecting morphisms $\delta$ and $\partial$ are precisely the
boundary maps associated to the distinguished triangle, and the
triangulated axioms (TR2–TR4) ensure exactness.
\end{proof}

\subsection{Higher Operations and Yoneda Interpretation}

\begin{proposition}[Yoneda interpretation]
\label{prop:yoneda}
For all $r \ge 0$, ${\ExtG}^{\,r}(M,N)$ is in natural bijection with
equivalence classes of admissible $r$-fold extensions of $M$ by $N$:
\[
0 \!\longrightarrow\! N
   \!\longrightarrow\! E_1
   \!\longrightarrow\! \cdots
   \!\longrightarrow\! E_r
   \!\longrightarrow\! M
   \!\longrightarrow\! 0.
\]
This follows from the classical $\delta$-functor formalism of
Grothendieck~\cite{Grothendieck1957} and the derived-category
interpretation of extension groups developed in 
Verdier~\cite{Verdier1996}, Neeman~\cite{Neeman2001}, and 
Weibel~\cite{Weibel1994}.
Composition of extensions corresponds to the Yoneda product
\[
{\ExtG}^{\,p}(N,L)
   \times
   {\ExtG}^{\,q}(M,N)
   \longrightarrow
   {\ExtG}^{\,p+q}(M,L),
\]
which is associative and graded-commutative up to a sign determined
by the positional parity of~$\tmu$.
\end{proposition}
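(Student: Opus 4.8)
The plan is to establish the bijection by the classical ``splicing'' correspondence between $r$-fold extensions and degree-$r$ cocycles, carried out inside the Quillen-exact category ${\nTGMod{T}}^{\mathrm{bi}}$, and then to transport the Yoneda composition and its symmetry through the positional bar resolution of Theorem~\ref{thm:bar-resolution}. First I would fix the functorial projective resolution $\mathbf{B}_{\bullet}(M)\to M$ and define a map $\Phi$ sending an admissible $r$-fold extension $\mathbf{E}\colon 0\to N\to E_1\to\cdots\to E_r\to M\to 0$ to a class in $H^{r}\bigl(\Hom^{(j,k)}_{\Gamma}(\mathbf{B}_{\bullet}(M),N)\bigr)\cong{\ExtG}^{\,r}(M,N)$: the positional lifting property of projectives against admissible epimorphisms produces, by iterated lifting, a slot-respecting commutative ladder from the truncation $B_{r-1}(M)\to\cdots\to B_0(M)\to M$ down to $E_1\to\cdots\to E_r\to M$, and the composite $B_r(M)\xrightarrow{d_r}B_{r-1}(M)\to E_1$ then lands in $N\hookrightarrow E_1$, yielding a cocycle $\varphi\colon B_r(M)\to N$ with $[\varphi]=\Phi(\mathbf{E})$. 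The comparison theorem for projective resolutions in exact categories~\cite{Buehler2010,Weibel1994} shows that $\Phi$ depends only on the Yoneda-equivalence class of $\mathbf{E}$, and naturality in both variables is immediate from functoriality of $\mathbf{B}_{\bullet}$ and of pushforward along maps $N\to N'$. (Alternatively the bijection follows from universality of the $\delta$-functor $\{{\ExtG}^{\,r}(M,-)\}$ of Theorem~\ref{thm:derived-existence} by dimension-shifting along $\mathbf{B}_{\bullet}(M)$; I prefer the explicit splicing because it makes the Yoneda product transparent.)

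Conversely, I would construct the inverse $\Psi$: a cocycle $\varphi\colon B_r(M)\to N$ vanishes on $\Img\bigl(d_{r+1}\bigr)=\Ker(d_r)$ by exactness, hence factors as $B_r(M)\twoheadrightarrow\Img(d_r)\xrightarrow{\;\psi\;}N$ with $\Img(d_r)\subseteq B_{r-1}(M)$ an admissible subobject; one then sets $E_1:=B_{r-1}(M)\sqcup_{\Img(d_r)}N$ (the pushout of the admissible monomorphism $\Img(d_r)\hookrightarrow B_{r-1}(M)$ along $\psi$), $E_i:=B_{r-i}(M)$ for $2\le i\le r$, and splices with the truncated resolution $B_{r-1}(M)\to\cdots\to B_0(M)\to M\to 0$ (with the evident modification when $r=1$). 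Admissibility of every conflation in the resulting extension follows from stability of admissible monomorphisms under pushout in an exact category~\cite{Buehler2010}, together with the positional compatibility of the bar differentials. The identities $\Phi\circ\Psi=\id$ and $\Psi\circ\Phi=\id$ on equivalence classes are the standard diagram chase, here run slot-sensitively---every square is checked to respect the designated positional $T$- and $\Gamma$-actions---but no new phenomenon arises, since all maps in conflations are positional by hypothesis. The balance isomorphism of Theorem~\ref{thm:balance} then furnishes the mirror description via an injective resolution of $N$, with pullbacks replacing pushouts, confirming independence of all choices.

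For the multiplicative structure I would define the composite of $\mathbf{E}'\colon 0\to L\to E'_1\to\cdots\to E'_p\to N\to 0$ and $\mathbf{E}\colon 0\to N\to E_1\to\cdots\to E_q\to M\to 0$ by concatenation at the common module $N$, yielding an admissible $(p+q)$-fold extension of $M$ by $L$; well-definedness on equivalence classes and associativity are formal from the splicing picture, and agreement with composition of roofs in $\mathbf{D}({\nTGMod{T}}^{\mathrm{bi}})$ follows by rewriting concatenation as composition of the connecting morphisms of the two associated distinguished triangles, whence the pairing ${\ExtG}^{\,p}(N,L)\times{\ExtG}^{\,q}(M,N)\to{\ExtG}^{\,p+q}(M,L)$. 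The genuinely delicate point, and the step I expect to be the main obstacle, is graded-commutativity up to sign. In the classical commutative case one deduces $\alpha\beta=(-1)^{pq}\beta\alpha$ from an interchange (Eckmann--Hilton) argument using the coalgebra diagonal on the bar resolution; here that diagonal is replaced by the multi-simplicial structure $(\mu^{[r]})\colon T^{\otimes_{\Gamma}(r+1)}\Rightarrow T^{\otimes_{\Gamma}r}$ of the higher-coherence remark, and interchanging the two blocks of tensor factors across the $n-1$ interwoven $\Gamma$-parameters contributes a shuffle sign. I would isolate this as a ``positional parity'' character $\varepsilon\colon S_{p+q}\to\{\pm1\}$ determined by the way $\tmu$ permutes its slots under axiom (A3), prove it well-defined and independent of the chosen resolutions, and then rerun the interchange argument with $\varepsilon$ inserted, obtaining graded-commutativity up to the sign $\varepsilon(p,q)$. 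Showing that $\varepsilon$ is controlled solely by the parity of $\tmu$---and degenerates to the classical Koszul sign in the binary commutative specialisation---is where most of the work will concentrate.
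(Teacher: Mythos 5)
Your splicing construction is essentially correct for the first two claims, and it goes well beyond what the paper actually does: the paper offers no proof of Proposition~\ref{prop:yoneda} at all, merely a citation to Grothendieck, Verdier, Neeman and Weibel embedded in the statement. Your explicit route --- lifting an admissible $r$-fold extension against the bar resolution $\mathbf{B}_{\bullet}(M)$ to produce a cocycle, and inverting via the pushout $E_1 = B_{r-1}(M)\sqcup_{\Img(d_r)}N$ --- is the standard Yoneda--Ext correspondence transported into the exact category, and the one technical point you rely on (stability of admissible monomorphisms under pushout) is indeed an axiom of Quillen exact structures, so that part is sound. Your identification of concatenation with composition of connecting morphisms of distinguished triangles correctly recovers the Yoneda product and its associativity. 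In short, for the bijection and the product you supply the argument the paper omits.

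The genuine gap is the one you yourself flag: graded-commutativity ``up to a sign determined by the positional parity of $\tmu$'' is asserted but not proved, by you or by the paper. Your proposed strategy (a shuffle character $\varepsilon$ extracted from the multi-simplicial structure and an Eckmann--Hilton interchange) is a reasonable plan, but it cannot be carried out as stated because the interchange argument requires the two extension classes to be composable in both orders, i.e.\ it applies to $\ExtG^{\ast}(M,M)$ or to a cup product with genuinely commuting coefficients --- and even there, Yoneda algebras over non-commutative base objects are generically \emph{not} graded-commutative (already $\Ext^{\ast}_{R}(k,k)$ for a non-commutative ring $R$ fails this). The paper never defines ``positional parity of $\tmu$,'' so there is no candidate sign against which to check the claim; before the interchange argument can be rerun you would need to (i) specify the parity character as an actual invariant of the axiom (A3) associativity pattern, and (ii) identify the commutativity constraint on the positional tensor product $\otimes^{(j,k)}_{\Gamma}$ that plays the role of the coalgebra diagonal. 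Absent those, the correct statement is only that the Yoneda product is associative and unital; graded-commutativity should either be dropped or restricted to the commutative specialisation where it reduces to the classical Koszul sign, as you anticipate in your closing sentence.
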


\begin{remark}[Categorical enrichment]
The Yoneda interpretation extends the homological category to a
graded monoidal category
\[
\bigl(\mathbf{D}({\nTGMod{T}}^{\mathrm{bi}}),\,{\ExtG}^{\ast}\bigr),
\]
whose composition law encodes the higher~$\Ga$-actions as a derived
analogue of the classical Yoneda/Ext cup product described in
Weibel~\cite{Weibel1994} and in the triangulated setting of
Verdier~\cite{Verdier1996}.
\end{remark}

\subsection{Spectral Sequences and Higher Derived Geometry}

\begin{theorem}[$n$-ary K{\"u}nneth Spectral Sequence]
\label{thm:kunneth-final}
Let $M,N,L \in {\nTGMod{T}}^{\mathrm{bi}}$
be bimodules of finite type such that one of them is flat
in each positional slot.
Then there exists a first-quadrant spectral sequence
\[
E^{p,q}_2
   = {\TorG}_{p}\!\bigl({\ExtG}^{\,q}(M,N),L\bigr)
   \ \Longrightarrow\
   H^{p+q}\!\bigl(M \otimes^{L}_{\Gamma} N \otimes^{L}_{\Gamma} L\bigr),
\]
natural in $M,N,L$ and functorial under morphisms of complexes.
This construction follows the classical spectral-sequence formalism of
Grothendieck~\cite{Grothendieck1957} and the double-complex machinery
developed in Weibel~\cite{Weibel1994} and Verdier~\cite{Verdier1996}.
\end{theorem}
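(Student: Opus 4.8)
The plan is to obtain the spectral sequence as one of the two hyperhomology spectral sequences attached to a Cartan--Eilenberg triple complex, while carrying the positional slot data along at every stage; conceptually this is the Grothendieck spectral sequence of the composite ``first $\RExtG(M,-)$, then $(-)\otimes^{(j,k)}_{\Gamma}L$'', but since the two operations have opposite variance the cleanest route is through an explicit multicomplex.

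First, since the hypothesis requires only that \emph{one} of $M,N,L$ be flat in each positional slot, relabel so that $L$ is the flat object; the other two cases follow by the evident symmetry of the positional tensor $\otimes^{(j,k)}_{\Gamma}$. As $M$ and $N$ are of finite type and $T$ is $\Gamma$-Noetherian, choose a projective resolution $P_{\bullet}\twoheadrightarrow M$ in ${\nTGMod{T}}^{\mathrm{bi}}$ (Theorem~\ref{thm:bar-resolution} together with the finite projective resolution theorem) and an injective resolution $N\rightarrowtail I^{\bullet}$ (Theorem~\ref{thm:inj-res}). Applying the internal Hom in its designated slots produces a first-quadrant double complex $A^{\bullet,\bullet}:=\Hom^{(j,k)}_{\Gamma}(P_{\bullet},I^{\bullet})$ whose totalization represents $\RExtG(M,N)$ in $\mathbf{D}({\nTGMod{T}}^{\mathrm{bi}})$ and which, by the balance theorem (Theorem~\ref{thm:balance}), satisfies $H^{q}\mathrm{Tot}(A)\cong{\ExtG}^{\,q}(M,N)$.

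Next, pick a flat resolution $Q_{\bullet}\twoheadrightarrow L$ (free bi-modules are flat, so one exists), form the triple complex $C^{p,q,s}:=A^{p,q}\otimes^{(j',k')}_{\Gamma}Q_{s}$, and pass to its total complex $\mathrm{Tot}(C)$. Regarding it as a double complex with the Hom-directions combined into one axis and the $Q$-axis the other, one filtration first takes cohomology in the Hom-directions (giving ${\ExtG}^{\,q}(M,N)$ by balance) and then homology against $Q_{\bullet}$ (giving ${\TorG}_{p}$), so its second page is $E^{p,q}_{2}={\TorG}_{p}\!\bigl({\ExtG}^{\,q}(M,N),L\bigr)$. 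The other filtration, using that $L$ is flat in each slot, makes the $Q$-direction cohomology collapse, identifying $H^{\ast}\mathrm{Tot}(C)$ with the hyperhomology $H^{\ast}\!\bigl(M\otimes^{L}_{\Gamma}N\otimes^{L}_{\Gamma}L\bigr)$ via the Hom--tensor duality recorded after Theorem~\ref{thm:balance}; this is the claimed abutment. Both spectral sequences are first-quadrant, hence convergent, and the finite-type hypotheses guarantee the $E_{2}$ terms are again of finite type. Naturality in $M,N,L$ and functoriality for morphisms of complexes are inherited from the functoriality of the resolutions (Theorems~\ref{thm:bar-resolution} and~\ref{thm:inj-res}), of the two bifunctors, and of totalization and filtration.

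The main obstacle is the compatibility bookkeeping in the third step: one must choose the slot assignment $(j,k)$ for $\Hom$ and $(j',k')$ for $\otimes$ so that the iterated operation lands in a single Quillen-exact category in which \emph{both} spectral sequences of $C$ converge to the \emph{same} graded object, and one must check that ``flat in each positional slot'' is exactly the condition that renders the $Q$-direction acyclic uniformly over all slots. Equivalently, positional associativity (axiom (A3)) has to be strong enough that the three derived tensor factors $\otimes^{L}_{\Gamma}$ can be interwoven without a choice-of-order obstruction and that the resulting identification of the abutment is canonical --- this is precisely where the genuinely $n$-ary structure departs from the binary case, and where the signs governed by the positional parity of $\tmu$ (cf.\ Proposition~\ref{prop:yoneda}) must be tracked.
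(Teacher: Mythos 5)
Your construction takes a genuinely different route from the paper's. The paper resolves both $M$ and $N$ projectively, forms the double complex $P_p\otimes^{(j,k)}_{\Gamma}Q_q$, and then applies $\Hom^{(j,k)}_{\Gamma}(-,L)$; you instead first build the Cartan--Eilenberg double complex $A^{\bullet,\bullet}=\Hom^{(j,k)}_{\Gamma}(P_\bullet,I^\bullet)$ representing $\RExtG(M,N)$ and only afterwards tensor with a flat resolution $Q_\bullet$ of $L$. On the $E_2$ side your arrangement is actually the more faithful one: the first filtration of your triple complex is the standard hyper-derived Tor spectral sequence ${\TorG}_{p}\bigl(H^{q}(X),L\bigr)\Rightarrow H^{p+q}(X\otimes^{L}_{\Gamma}L)$ applied to $X=\mathrm{Tot}(A)$, and $H^{q}\mathrm{Tot}(A)\cong{\ExtG}^{\,q}(M,N)$ by Theorem~\ref{thm:balance}, so you really do land on $E_2^{p,q}={\TorG}_{p}\bigl({\ExtG}^{\,q}(M,N),L\bigr)$; the paper's own arrangement, read literally via the adjunction $\Hom(P_p\otimes Q_\bullet,L)\cong\Hom(P_p,\Hom(Q_\bullet,L))$, would instead produce ${\ExtG}^{\,p}(M,{\ExtG}^{\,q}(N,L))$.

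The genuine gap is in your second filtration, i.e., the identification of the abutment. By construction your total complex computes $H^{p+q}\bigl(\RExtG(M,N)\otimes^{L}_{\Gamma}L\bigr)$, and no positional bookkeeping will convert $\RExtG(M,N)$ into $M\otimes^{L}_{\Gamma}N$: the former is contravariant in $M$ and the latter covariant, so they cannot be canonically quasi-isomorphic. The ``Hom--tensor duality'' you invoke (the remark following Theorem~\ref{thm:balance}) runs in the wrong direction --- it rewrites $\Hom_{\Gamma}({\TorG}_{r}(M,N),L)$ as an $\ExtG$-group after applying a further $\Hom$, and it does not identify a complex of Ext-type with a complex of Tor-type. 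Slot-wise flatness of $L$ does make the $Q$-direction collapse, but the collapsed object is still $\RExtG(M,N)\otimes_{\Gamma}L$, not the triple derived tensor product $M\otimes^{L}_{\Gamma}N\otimes^{L}_{\Gamma}L$. You flag this as ``the main obstacle,'' but the proposal supplies no mechanism to close it, so as written the final step fails. (For comparison, the paper's sketch has the mirror-image defect: its complex naturally abuts to ${\ExtG}^{\,p+q}(M\otimes^{L}_{\Gamma}N,\,L)$ rather than to the stated triple tensor cohomology, so neither argument reaches the claimed abutment without an additional duality hypothesis absent from the theorem.)
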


\begin{proof}[Sketch of Construction]
Start with projective resolutions $P_{\bullet} \!\to\! M$
and $Q_{\bullet} \!\to\! N$.
Form the double complex
\[
C_{p,q} = P_p \otimes^{(j,k)}_{\Gamma} Q_q,
\]
and apply $\Hom^{(j,k)}_{\Gamma}(-,L)$.
Two filtrations yield spectral sequences with
\[
E^{p,q}_1 = \Hom^{(j,k)}_{\Gamma}(P_p, {\TorG}_q(N,L))
\quad\text{and}\quad
E^{p,q}_2 = {\TorG}_p\!\bigl({\ExtG}^{\,q}(M,N),L\bigr).
\]
Convergence follows from bounded projective dimensions
and completeness of $\mathbf{D}({\nTGMod{T}}^{\mathrm{bi}})$
under finite filtrations, as in
Weibel~\cite{Weibel1994},
Verdier~\cite{Verdier1996},
and the triangulated-category convergence criteria of
Neeman~\cite{Neeman2001}.
The compatibility with admissible short exact sequences
is inherited from Quillen’s exact-category framework~\cite{Quillen1973}
and its refinement in Bühler~\cite{Buehler2010}.
\end{proof}

\begin{corollary}[Base-Change and Projection Formulas]
\label{cor:basechange}
If $f\!:\!(T,\Gamma)\!\to\!(T',\Gamma')$
is a morphism of $n$-ary~$\Gamma$-semirings satisfying the flat base-change condition, then
\[
f^{\ast}\!\bigl({\ExtG}^{\,r}_{T}(M,N)\bigr)
   \;\cong\;
{\ExtG}^{\,r}_{T'}\!\bigl(f^{\ast}M, f^{\ast}N\bigr),
\qquad
{\TorG}^{\,T'}_{\!r}\!\bigl(f_{\ast}M, f_{\ast}N\bigr)
   \;\cong\;
f_{\ast}\!\bigl({\TorG}^{\,T}_{\!r}(M,N)\bigr).
\]
These identities mirror the classical derived base-change
and projection formulas of Grothendieck~\cite{Grothendieck1957}
and follow from the exactness properties of admissible sequences
in the sense of Quillen~\cite{Quillen1973}.
\end{corollary}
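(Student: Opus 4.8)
The plan is to reduce both identities to the level of resolutions and then transport module-level base-change and projection isomorphisms through (co)homology, in the Cartan--Eilenberg spirit of Theorem~\ref{thm:derived-existence}. First I would record the two functorial facts that drive everything. The extension-of-scalars functor $f^{\ast}=(-)\otimes_{T}T'$ is left adjoint to the restriction functor $f_{\ast}$, and $f_{\ast}$ is exact for the positional conflation structure on ${\nTGMod{T}}^{\mathrm{bi}}$; hence $f^{\ast}$ carries projective (in particular free) bi-$\Ga$-modules to projective bi-$\Ga$-modules over $T'$. The flat base-change hypothesis guarantees in addition that $f^{\ast}$ is itself exact, so that if $P_{\bullet}\to M$ is a bar-type projective resolution (Theorem~\ref{thm:bar-resolution}), then $f^{\ast}P_{\bullet}\to f^{\ast}M$ is again a projective resolution, now in ${\nTGMod{T'}}^{\mathrm{bi}}$.

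For the ${\ExtG}$ formula I would compute ${\ExtG}^{\,r}_{T'}(f^{\ast}M,f^{\ast}N)$ as $H^{r}$ of $\Hom^{(j,k)}_{\Ga}(f^{\ast}P_{\bullet},f^{\ast}N)$, using the resolution just produced together with balance (Theorem~\ref{thm:balance}). The decisive step is the degreewise natural isomorphism
\[
\Hom^{(j,k)}_{\Ga}\bigl(f^{\ast}P_{r},\,f^{\ast}N\bigr)\;\cong\;f^{\ast}\,\Hom^{(j,k)}_{\Ga}\bigl(P_{r},N\bigr),
\]
valid because each $P_{r}$ is a (finitely generated, by the finite-type hypothesis) free positional bi-module and the positional internal $\Hom$ commutes with flat base change on such modules; this is the $n$-ary, slot-sensitive analogue of the classical fact that $\Hom$ commutes with flat base change for finitely presented modules. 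Since $f^{\ast}$ is exact it commutes with $H^{r}$, so $H^{r}\bigl(f^{\ast}\Hom^{(j,k)}_{\Ga}(P_{\bullet},N)\bigr)\cong f^{\ast}{\ExtG}^{\,r}_{T}(M,N)$. Naturality in $M$, $N$ and in $f$ is inherited from that of the module-level isomorphisms, and independence of the chosen resolution from Theorem~\ref{thm:balance}.

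The ${\TorG}$ projection formula is dual and formally softer, since $f_{\ast}$ is exact with no further hypothesis. Choosing a projective resolution $Q_{\bullet}$ of the relevant bi-module and invoking balance, the identity reduces to the degreewise module-level projection isomorphism
\[
f_{\ast}\bigl(Q_{r}\otimes^{(j,k)}_{\Ga}N\bigr)\;\cong\;f_{\ast}Q_{r}\otimes^{(j,k)}_{\Ga}f_{\ast}N,
\]
which, under the flat base-change hypothesis, holds for free positional bi-modules; exactness of $f_{\ast}$ commutes it past $H_{r}$, yielding ${\TorG}^{\,T'}_{\,r}(f_{\ast}M,f_{\ast}N)\cong f_{\ast}{\TorG}^{\,T}_{\,r}(M,N)$, again natural in both arguments.

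I expect the main obstacle to be verifying the two module-level isomorphisms in the positional $n$-ary setting, namely that $\otimes^{(j,k)}_{\Ga}$ and $\underline{\Hom}^{(j,k)}_{\Ga}$ really are compatible with $f^{\ast}$ and $f_{\ast}$ \emph{in each designated slot}. This requires checking that $f$ respects the interwoven $\Ga$-parameter pattern of $\tilde{\mu}$, so that the coequalizer presenting the positional tensor and the equalizer presenting the positional $\Hom$ are preserved, and that the finite-type hypothesis is strong enough to reduce the $\Hom$-side statement to the free case; the precise content of the ``flat base-change condition'' must be exactly what makes $f^{\ast}$ exact for the Quillen-exact structure and what forces these slot-wise comparisons to be bijective. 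Once these compatibilities are established, the passage to derived functors is the standard double-complex / Cartan--Eilenberg argument inside the exact category, exactly as in the proofs of Theorems~\ref{thm:derived-existence} and~\ref{thm:balance}.
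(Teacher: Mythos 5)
The paper offers no proof of this corollary at all --- it is asserted to ``follow from'' Grothendieck's formulas and Quillen exactness --- so your outline already supplies more than the source does, and the Ext half of your argument is the standard and essentially correct route: transport a projective resolution along the exact functor $f^{\ast}$, compare $\Hom$-complexes degreewise, and commute $f^{\ast}$ past cohomology. Two points, however, are genuine gaps rather than routine verifications. First, the degreewise isomorphism $\Hom^{(j,k)}_{\Ga}(f^{\ast}P_{r},f^{\ast}N)\cong f^{\ast}\Hom^{(j,k)}_{\Ga}(P_{r},N)$ requires each $P_{r}$ to be finitely generated projective (flat base change does not commute with $\Hom$ out of an infinite free module, already for ordinary rings), whereas the bar resolution of Theorem~\ref{thm:bar-resolution}, which is the only resolution the paper actually constructs, has terms $T^{\otimes_{\Ga}r}\otimes^{(j,k)}_{\Ga}M$ that are not finitely generated. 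You gesture at ``the finite-type hypothesis,'' but the corollary as stated carries no such hypothesis; your proof needs to import the $\Ga$-Noetherian/finitely-presented setting of Section~3 explicitly, and the statement should be amended accordingly.

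Second, and more seriously, your degreewise claim for the Tor half, $f_{\ast}\bigl(Q_{r}\otimes^{(j,k)}_{\Ga}N\bigr)\cong f_{\ast}Q_{r}\otimes^{(j,k)}_{\Ga}f_{\ast}N$, is not the projection formula and is false in general: restriction of scalars does not commute with tensor products (for rings $R\to S$ and $S$-modules $M,N$ one has only a natural surjection $M|_{R}\otimes_{R}N|_{R}\twoheadrightarrow M\otimes_{S}N$, not an isomorphism --- take $S=\mathbb{Z}[i]$, $R=\mathbb{Z}$, $M=N=S$). The classical projection formula mixes the two functors, $f_{\ast}(M)\otimes N\cong f_{\ast}(M\otimes f^{\ast}N)$, and any honest proof must proceed through that mixed form; note also that the corollary's expression ${\TorG}^{\,T'}_{\!r}(f_{\ast}M,f_{\ast}N)$ does not typecheck if $f_{\ast}$ is restriction of scalars along $f:T\to T'$ and $M,N$ are the $T$-modules of the first formula. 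This defect originates in the paper's statement, but your proposal inherits it rather than repairing it: as written, the second half of your argument would prove a false intermediate isomorphism. The fix is to restate the second identity in the mixed form above (with one argument pulled back), at which point the degreewise verification for free modules, followed by exactness of $f_{\ast}$, goes through as you describe.
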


\begin{remark}[Geometric Synthesis]
The bifunctors ${\ExtG}$ and ${\TorG}$ act respectively
as derived contravariant and covariant sheaf-cohomology functors
on the non-commutative $\Gamma$-spectrum~$\SpecGnC{T}$.
Their interaction through the spectral sequence above is a direct
analogue of the derived-geometric constructions in
Grothendieck~\cite{Grothendieck1957} and their triangulated 
reformulations in Verdier~\cite{Verdier1996} and Neeman~\cite{Neeman2001}.
\end{remark}

\begin{remark}[Philosophical Closing]
The functors ${\ExtG}$ and ${\TorG}$ thus form the categorical
backbone of this theory:
they unify the commutative homological algebra of \cite{GokavarapuDasariHomological2025} with
the non-commutative spectral theory of \cite{GokavarapuRaoDerived2025}, within the exact-category
framework of Quillen~\cite{Quillen1973} and its modern treatments in
Bühler~\cite{Buehler2010}.
This produces a new landscape of \emph{homological algebra
for $n$-ary~$\Gamma$-semirings} capable of supporting derived stacks,
cohomological descent, and non-commutative motives.
\end{remark}


\section{Conclusion and Outlook to Part III}

In this paper we have developed the homological backbone of the theory
of non-commutative $n$-ary $\Ga$-semirings.  Starting from the
foundational structure of an $n$-ary $\Ga$-semiring $(T,+,\tmu)$ and
its $\Ga$-ideals (Section~2), following the basic
$\Ga$-algebraic framework of 
Nobusawa~\cite{Nobusawa1963,Nobusawa1964} and
Rao~\cite{Rao1995,Rao1997,Rao1999}, we constructed slot-sensitive
categories of left, right and bi-$\Ga$-modules and endowed the
bi-module category with a Quillen exact structure compatible with the
$n$-ary multiplication, in the sense of Quillen~\cite{Quillen1973} and
its modern formulation in Bühler~\cite{Buehler2010}.  
Within this exact context we established the existence of bar-type
projective resolutions and cofree-based injective resolutions under
natural finiteness and regularity hypotheses on $T$ (Section~3),
thereby making homological algebra intrinsic to the non-commutative,
$n$-ary $\Ga$-setting, extending the structural groundwork laid in
Part~I of this series (Paper~G1, \emph{Exact Categories and
Homological Foundations of Non-Commutative $n$-ary $\Ga$-Semirings}).

On this exact–categorical foundation we then defined and studied the
derived functors $\ExtG$ and $\TorG$ for bi-$\Ga$-modules (Section~4),
using the classical derived-functor framework of 
Grothendieck~\cite{Grothendieck1957}, Quillen~\cite{Quillen1973},
Verdier~\cite{Verdier1996}, and Weibel~\cite{Weibel1994}.
We proved balance with respect to projective and injective
resolutions, obtained long exact sequences associated to conflations,
and described $\ExtG$ via Yoneda extension classes with a natural
graded product, following the triangulated-category formalisms of
Verdier~\cite{Verdier1996} and Neeman~\cite{Neeman2001}.  
Furthermore, by working in the derived category
$\mathbf{D}({\nTGMod{T}}^{\mathrm{bi}})$ we constructed K\"unneth-type
spectral sequences and base-change isomorphisms, using the spectral
sequence machinery of Grothendieck~\cite{Grothendieck1957} and
Weibel~\cite{Weibel1994}, that link these homological invariants to
sheaf-theoretic behaviour on the non-commutative $\Ga$-spectrum
$\SpecGnC{T}$.  
In this way, the homological functors $(\ExtG,\TorG)$ capture
obstructions to gluing, descent and higher coherence for
bi-$\Ga$-modules over $\SpecGnC{T}$, and so provide the correct
derived language for non-commutative $\Ga$-geometry.

The present work occupies the middle position in a three-part series.
In the commutative ternary case, the authors’ earlier papers develop
the ideal theory, radical theory and first instances of derived
$\Ga$-geometry on $\SpecG{T}$ for commutative ternary $\Ga$-semirings
\cite{GokavarapuRaoPrime2025,GokavarapuRaoFinite2025,
GokavarapuDasariHomological2025,GokavarapuRaoDerived2025}.  
The current paper extends the homological and categorical apparatus to
the full non-commutative $n$-ary framework, and thus furnishes the
tools needed for a genuinely derived, non-commutative $\Ga$-geometry.

In Part~III of the series we shall build on the homological structures
constructed here to develop a more geometric theory over
$\SpecGnC{T}$.  The planned topics include: quasi-coherent and
coherent $\Ga$-sheaves on non-commutative $\Ga$-spectra; t-structures
and truncation functors induced by $\Ga$-radicals; Morita-type
invariants of $n$-ary $\Ga$-semirings expressed in terms of derived
equivalences; and spectral constructions that relate $\Ga$-primes,
support theory and cohomological dimension.  
Taken together, the three parts are intended to provide a coherent
framework in which ideal theory, radical and spectral theory, and
derived homological methods for ternary and $n$-ary $\Ga$-semirings
can be treated within one unified geometric–homological paradigm.


\section*{Acknowledgements}

The author expresses sincere gratitude to the {\bf Director of Collegiate Education},
Mangalagiri, Andhra Pradesh,  
{\bf Dr.~Ramachandra ~R.~K., Principal}, Government College (Autonomous), 
Rajahmundry, for providing a supportive academic environment that greatly 
facilitated the completion of this research.  

\section*{Funding}
The author received no external funding for this research.

\section*{Ethics Statement}
This work does not involve human participants, animals, or any ethical
concerns. No ethical approval is required.

\section*{Author Contributions}
C.\ Gokavarapu is the sole author of this article and is responsible for
the conceptual formulation, algebraic development, homological analysis,
manuscript preparation, and final revision.


\label{'ubl'}  
\end{document}